\colorlet{shadecolor}{gray!30}
\newenvironment{ana}
  {\begin{leftbar}
  \begin{shaded} }
{  \end{shaded}\end{leftbar}}
\newcommand{\arxiv}[1]{\begin{ana} #1\end{ana}}
\renewcommand{\arxiv}[1]{#1}
\newcommand{\comment}[1]{}
\newcommand{\longcomment}[1]{}
\renewcommand{\longcomment}[1]{\ovalbox{\begin{minipage}{.9\textwidth}\color{blue}#1\end{minipage}}}
\renewcommand{\comment}[1]{{\color{blue}\ovalbox{#1}}}
\def\comment#1{}
\def\fddto{\stackrel{\rm f.d.d.}{\Longrightarrow}}
\newcommand{\ind}{{\bf 1}}
\def\indd#1{{\ind}_{\{#1\}}}
\def\inddd#1{{\ind}_{\left\{#1\right\}}}
\newcommand{\proba}{\mathbb P}
\newcommand{\esp}{{\mathbb E}}
\newcommand{\eqnh}{\begin{eqnarray*}}
\newcommand{\eqne}{\end{eqnarray*}}
\newcommand{\eqnhn}{\begin{eqnarray}}
\newcommand{\eqnen}{\end{eqnarray}}
\newcommand{\equh}{\begin{equation}}
\newcommand{\eque}{\end{equation}}
\def\summ#1#2#3{\sum_{#1 = #2}^{#3}}
\def\prodd#1#2#3{\prod_{#1 = #2}^{#3}}
\newcommand{\eqd}{\stackrel{d}{=}}
\def\topp#1{^{(#1)}}
\def\abs#1{\left|#1\right|}
\def\sabs#1{|#1|}
\def\ccbb#1{\left\{#1\right\}}
\def\sccbb#1{\{#1\}}
\def\pp#1{\left(#1\right)}
\def\spp#1{(#1)}
\def\bb#1{\left[#1\right]}
\def\mmid{\;\middle\vert\;}
\def\floor#1{\left\lfloor #1 \right\rfloor}
\def\sfloor#1{\lfloor #1 \rfloor}
\def\aa#1{\left\langle #1\right\rangle}
\def\vv#1{{\boldsymbol #1}}
\def\qmand{\quad\mbox{ and }\quad}
\def\qmwith{\quad\mbox{ with }\quad}
\def\mfa{\mbox{ for all }}
\def\mmas{\mbox{ as }}
\def\wt#1{\widetilde{#1}}
\def\what#1{\widehat{#1}}
\def\limn{\lim_{n\to\infty}}
\def\weakto{\Rightarrow}
\def\Z{{\mathbb Z}}
\def\R{{\mathbb R}}
\def\Rd{{\mathbb R^d}}
\def\N{{\mathbb N}}
\def\B{{\mathbb B}}
\def\qp#1{(#1;q)_\infty}
\def\qps#1{(#1;q)}
\def\qpp#1{\left(#1;q\right)_\infty}
\newcommand{\hidecomment}[1]{}
\newtheorem{theorem}{Theorem}[section]
\newtheorem{lemma}[theorem]{Lemma}
\newtheorem{proposition}[theorem]{Proposition}
\theoremstyle{definition}
\theoremstyle{remark}
\newtheorem{remark}[theorem]{Remark}
\theoremstyle{remark}
\newcommand{\calL}{{\mathcal L}}
\def\<{\langle}
\def\>{\rangle}
\newcommand{\NN}{\mathds{N}}
\newcommand{\heightfunction}{cumulative density}
\newcommand{\eps}{\varepsilon}
\newcommand{\qqq}{g}
\def\<{\langle}
\def\>{\rangle}
\numberwithin{equation}{section}
\title[Limit fluctuations of open ASEP]{Limit fluctuations for
density of  asymmetric simple exclusion processes  with open boundaries}
\author{W\l odzimierz Bryc}
\address
{
W\l odzimierz Bryc\\
Department of Mathematical Sciences\\
University of Cincinnati\\
2815 Commons Way\\
Cincinnati, OH, 45221-0025, USA.
}
\email{wlodzimierz.bryc@uc.edu}
\author{Yizao Wang}
\address
{
Yizao Wang\\
Department of Mathematical Sciences\\
University of Cincinnati\\
2815 Commons Way\\
Cincinnati, OH, 45221-0025, USA.
}
\email{yizao.wang@uc.edu}
\keywords{asymmetric simple exclusion process; scaling limit; phase transition; Askey--Wilson process; Brownian excursion; Brownian meander; Laplace transform; tangent process}
\subjclass[2010]
{60F05; 
60K35} 
\begin{document}\sloppy
\begin{abstract}We investigate the fluctuations of
\heightfunction\ of
particles in
the asymmetric simple exclusion process
with respect to the stationary distribution (also known as the steady state), as a stochastic process indexed by $[0,1]$. In three phases of the model and their boundaries within the fan region,
we establish a complete picture of the scaling limits of the fluctuations of the density as the number of sites goes to infinity. In the maximal current phase, the limit fluctuation is the sum of two
 independent processes, a Brownian motion and a Brownian excursion. This extends an earlier result by \citet{derrida04asymmetric} for totally asymmetric simple exclusion process in the same phase.
 In the low/high density phases,  the limit fluctuations are Brownian motion. Most interestingly,  at the boundary of the maximal current phase, the limit fluctuation is the sum of two independent
 processes, a Brownian motion and a Brownian meander (or a time-reversal of the latter, depending on the side of the boundary).
Our proofs rely on  a  representation
of the joint   generating function of the %
asymmetric simple exclusion process with respect to the stationary distribution
in terms of joint moments
 of a Markov processes, which is  constructed from
orthogonality measures
 of the
Askey--Wilson
 polynomials.
\end{abstract}

\maketitle

\section{Introduction and main results}

\subsection{Background}

The asymmetric simple exclusion process (ASEP) with open boundaries in one dimension is one of the most widely investigated models for open non-equilibrium systems in the physics literature. %
The process models particles jumping independently with hardcore repulsion over a one-dimensional lattice, which also has particles injected to the left end and removed from the right end, and an external field driving the particles towards the right direction.
The ASEP, despite its simple definition,
captures
representative features of
more complicated models, %
including in particular phase transitions.
The model actually has its origin in modeling protein synthesis in biology \citep{macdonald68kinetics}.
In mathematics literature, the model was first investigated by \citet{spitzer70interaction}, see also \citet[Section 3]{liggett85interacting} for early developments. See more references %
on background, motivations and applications in the survey papers
\citep{derrida06matrix,derrida07nonequilibrium,blythe07nonequilibrium}.

The ASEP with open boundaries is an irreducible %
finite-state Markov process on
the state space $\{0,1\}^n$ with parameters
\equh\label{eq:abcd}
\alpha>0,\quad\beta>0,\quad \gamma\geq 0,\quad \delta\geq 0,\qmand 0\leq q<1.
\eque Informally, the
process models the evolution of the particles located at sites $1,\dots,n$ that can jump to the right with rate 1 and to the left
with rate $q$, if the target site is unoccupied. Furthermore, particles arrive at site $1$  (respectively, $n$),  if empty, at rate $\alpha$  (respectively, $\delta$), and
leave site $n$ (respectively, $1$), if occupied, at rate $\beta$ (respectively, $\gamma$).  The transitions are summarized in Figure \ref{Fig1}.
For $q<1$,   particles move in an asymmetric way,
  with higher rate to the right than to the left;
  in the special case $q = 0$,
particles move only to the right and
the model is known as the totally asymmetric simple exclusion process (TASEP).

We let $\pi_n$ denote the stationary distribution of the ASEP as a Markov process  on $\{0,1\}^n$,
which is also called {\em  the steady state} in the physics literature.
We let $\tau_1,\dots,\tau_n$ denote the occupations of each corresponding location: $\tau_j = 1$ if the $j$-th location is occupied by a particle, and $\tau_j = 0$ otherwise. All statistics of the ASEP are then expressed in terms of $\tau_1,\dots,\tau_n$.

\begin{figure}[H]
  \begin{tikzpicture}[scale=.95]
\draw [fill=black, ultra thick] (.5,1) circle [radius=0.2];
  \draw [ultra thick] (1.5,1) circle [radius=0.2];
\draw [fill=black, ultra thick] (2.5,1) circle [radius=0.2];
  \draw [ultra thick] (5,1) circle [radius=0.2];
   \draw [fill=black, ultra thick] (6,1) circle [radius=0.2];

    \draw [ultra thick] (7,1) circle [radius=0.2];
      \draw [fill=black, ultra thick] (9.5,1) circle [radius=0.2];
   \draw [ultra thick] (10.5,1) circle [radius=0.2];
     \draw[->] (-1,2.3) to [out=-20,in=135] (.5,1.5);
   \node [above right] at (-.2,2) {$\alpha$};
     \draw[->] (10.5,1.5) to [out=45,in=200] (12,2.3);
     \node [above left] at (11.2,2) {$\beta$};
            \node  at (8.25,1) {$\cdots$};  \node  at (3.75,1) {$\cdots$};
      \node [above] at (6.5,1.8) {$1$};
      \draw[->,thick] (6.1,1.5) to [out=45,in=135] (7,1.5);
        \node [above] at (5.5,1.8) {$q$};
            \draw[<-] (5,1.5) to [out=45,in=135] (5.9,1.5);
                 \node [above] at (10,1.8) {$1$};
                \draw[->,thick] (9.6,1.5) to [out=45,in=135] (10.4,1.5);
               \node [above] at (9,1.8) {$q$};
                 \draw[<-] (8.4,1.5) to [out=45,in=135] (9.4,1.5);
       \draw[<-] (-1,-.3) to [out=0,in=-135] (.5,0.6);
   \node [below right] at (-.2,0) {$\gamma$};
    \node [above] at (0.5,0) {$1$};
    \node [above] at (1.5,0) {$2$};
   \node [above] at (2.5,0) {$3$};
     \node [above] at (3.75,0) {$\cdots$};
     \node [above] at (6,0) {\textcolor{white}{$+1$} $k$ \textcolor{white}{$+1$}};
   \node [above] at (5,0) {$k-1$};
       \node [above] at (7,0) {$k+1$};
        \node [above] at (8.25,0) {$\cdots$};
         \node [above] at (10.5,0) {\textcolor{white}{$+$}$n$\textcolor{white}{$1$}};
          \node [above] at (9.5,0) {$n-1$};
        \draw[<-] (10.5,.6) to [out=-45,in=180] (12,-.3);
   \node [below left] at (11.2,0) {$\delta$};
\end{tikzpicture}
\caption{\label{Fig1}Transition rates of the asymmetric simple exclusion process with open boundaries, with parameters $\alpha,\beta,\gamma,\delta, q$.
Black disks represent occupied sites.
}
\end{figure}
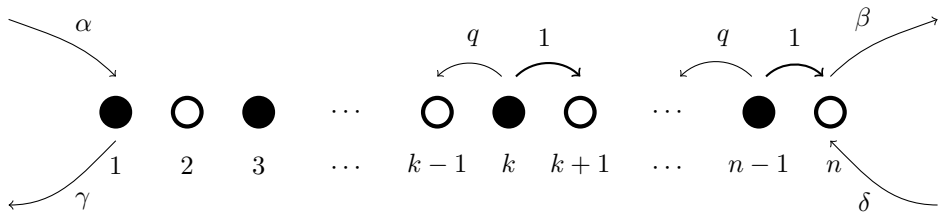

Throughout we assume~\eqref{eq:abcd} and work with the following parameterization of the ASEP, which dates back at least to the 90s in the physics literature (e.g.~\citep{sandow94partially}):
set %
\begin{align*}
\kappa^\pm_{x,y} & = \frac1{2x}\pp{1-q-x+y\pm\sqrt{(1-q-x+y)^2+4xy}},
\end{align*}
and %
denote
 \equh\label{eq:ABCD} A = \kappa^+_{\beta,\delta}, \quad B = \kappa^-_{\beta,\delta}, \quad C=  \kappa^+_{\alpha,\gamma},
\qmand D= \kappa^-_{\alpha,\gamma}. \eque
By definition, $A,C\geq 0$ and it is easy to
check
that $-1<B,D\leq 0$,
 compare~\citep{uchiyama04asymmetric,bryc17asymmetric}.

 The phase transition of the ASEP is known to be
characterized by $A$ and $C$ only. %
For example, it has been known since
\citet{derrida92exact,sandow94partially} that the ASEP has the following three phases:
\begin{enumerate}
\item maximal current phase $A<1,C<1$,
\item  low density phase $C>1,C>A$,
\item  high density phase $A>1,A>C$.
\end{enumerate}
\citet{derrida02exact,derrida03exact} distinguish also the two regions
\begin{enumerate}
\item {fan region} $AC<1$,
\item  {shock region} $AC>1$.
\end{enumerate}
Figure~\ref{Fig3} illustrates the three phases and the two regions.
In this paper, we restrict ourselves to the fan region and its boundary,
 as our
approach does not work for the shock region. See \citep{derrida02exact,derrida03exact} for more discussions
of the properties of ASEP in the shock region.

Another set of commonly used parameters is the pair $(\rho_a,\rho_b)$ with
\[
\rho_a=\frac1{1+C} \qmand \rho_b =\frac A{1+A}.
\]
For example, the fan region and
the
shock region are often characterized equivalently by $\rho_a>\rho_b$ and $\rho_a<\rho_b$, respectively (e.g.~\citep[]{derrida02exact} and \citep[(1.1)]{derrida03exact}).
From the point of view of modeling a non-equilibrium system with open boundaries, the two parameters represent the densities of the two reservoirs connected to the left and the right of the
system. %
For convenience, %
we shall use $A$ and $C$ exclusively in the sequel.

 \begin{figure}%

  \begin{tikzpicture}[scale=1.4]

\draw[scale = 1,domain=6.8:11,smooth,variable=\x,dotted,thick] plot ({\x},{1/((\x-7)*1/3+2/3)*3+5});

\fill[pattern=north east lines, pattern color=gray!60] (5,5)--(5,10) -- plot [domain=6.8:11]  ({\x},{1/((\x-7)*1/3+2/3)*3+5}) -- (11,5) -- cycle;

 \draw[->] (5,5) to (5,10.2);
 \draw[->] (5.,5) to (11.2,5);
   \draw[-, dashed] (5,8) to (8,8);
   \draw[-, dashed] (8,8) to (8,5);
   \draw[-, dashed] (8,8) to (10,10);
   \node [left] at (5,8) {\scriptsize$1$};
   \node[below] at (8,5) {\scriptsize $1$};
     \node [below] at (11,5) {$A = \kappa^+_{\beta,\delta}$};
   \node [left] at (5,10) {$C = \kappa^+_{\alpha,\gamma}$};

\node[above] at (9.5,9.75) {shock region};
\node[above ] at (10.5,7.5) {boundary of  fan/shock region};
\draw[->] (10,7.5) to [out=-45, in = 45] (10,6.8);
 \draw[->] (4.5,8) to [out=-45,in=-135] (6,8);
  \node [above] at (4,8) {$\mathbb B+\widehat{\mathbb B}^{me}$};

  \node[left] at (8,4.5) {$\mathbb B+%
  {\mathbb B}^{me}$};
 \draw[->] (8,4.5) to [out =45,in= -45] (8,6) ;
  \draw[-] (8,4.9) to (8,5.1);
   \draw[-] (4.9,8) to (5.1,8);

 \node [below] at (5,5) {\scriptsize$(0,0)$};
     \node [above] at (4.5,9) {$\mathbb B$};
     \draw[->] (4.5,9) to [out=-45,in=-135] (6,9);
    \node [above] at (6.5,8.5) {LD};
    \node[above] at (8,9) {LD};
      \node [below] at (12,6.5) {$\mathbb B$};
      \draw[->] (12,6.1) to [out=-90,in=-35] (10,5.5);
    \node [below] at (10,6) {HD}; %
      \node [above] at (10,9) {HD};

     \draw[->] (4.5,6.5) to [out=-45,in=-135] (6,6.58);
  \node [above] at (4,6.5) {$\mathbb B+ {\mathbb B}^{ex}$};
 \node [below] at (6.5,6) {MC};%

\end{tikzpicture}

\caption{  \label{Fig3}
A phase diagram for the limit fluctuations of density of the ASEP in the steady state.
LD, HD, MC stand for low density, high density and maximal current %
(phases), respectively.
The fan region consists of the shaded area.
$\B, \B^{ex}$ and $\B^{me}$ stand for Brownian motion, excursion and meander, respectively, and $\what \B^{me}$ stands for the reversed Brownian meander defined by $\{\what \B^{me}_x\}_{x\in[0,1]} \eqd \{ \B^{me}_{1-x} - \B^{me}_1\}_{x\in[0,1]}$. Processes in the sums are assumed to be independent. The multiplicative constants
(possibly depending on $A,C$) in front of processes are omitted.
}
\end{figure}
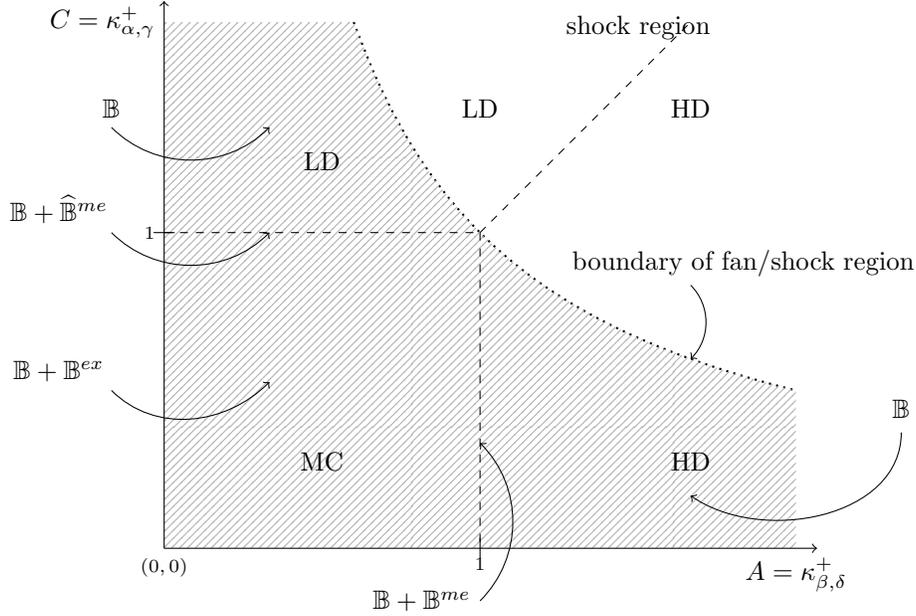

 We are interested in %
  the {\em \heightfunction} function
\[
[0,1]\ni x\mapsto \frac1n \summ j1{\floor{nx}}\tau_j,
\]
which we  consider as a random process under $\pi_n$.
The following limits in probability for the \heightfunction\ function are well known:
  \begin{equation}\label{limH} %
\limn\frac1n\summ j1{\floor {nx}} \tau_j= \left\{\begin{array}{lll}
 \displaystyle  \frac 12x &  A<1,C<1%
& \mbox{(maximal current phase)}
\\ \\
 \displaystyle  %
 \frac1{1+C}x & C>1,C>A%
  & \mbox{(low density phase)}
  \\ \\
 \displaystyle  %
 \frac A{1+A}x & A>1,A>C %
&\mbox{(high density phase)}
 ,
\end{array}
\right.
\end{equation}
see for example \citep{schutz93phase,derrida02exact,uchiyama04asymmetric}.
Phase diagram affects the behavior of many other statistics, including %
current \citep{derrida93exact,sandow94partially}, correlation functions of the density \citep{essler96representations,uchiyama05correlation}, and the large deviation functionals of the density \citep{derrida02exact}  or the current \citep{degier11large}.
See \citep{derrida07nonequilibrium} and more references therein.

The fluctuations of the %
\heightfunction\  function  %
with appropriate normalization %
are
easy to
describe
for the boundary of the fan region ($AC = 1$). Since it is known that in this case, %
 $\tau_1,\dots,\tau_n$ are i.i.d.~Bernoulli random variables with %
 mean
$A/(1+A) = 1/(1+C)$ (see \citep{enaud04large} and Remark~\ref{rem:T0}), the scaling limit
is
the Brownian motion, an immediate consequence of Donsker's theorem %
\citep{billingsley99convergence}.
\begin{theorem}[Boundary of fan region]%
\label{T0}
When $AC = 1$,
$$
\frac1{\sqrt n}\ccbb{\summ j1{\floor {nx}}\pp{\tau_j - \frac A{1+A}}}_{x\in[0,1]} \Rightarrow \frac{\sqrt{A}}{1+A}\ccbb{\B_x}_{x\in[0,1]}
$$
as $n\to\infty$ in the space $D([0,1])$.
\end{theorem}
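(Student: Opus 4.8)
The plan is to reduce the statement entirely to the classical functional central limit theorem, exploiting the degeneracy of the steady state quoted just before the theorem: when $AC=1$, the occupation variables $\tau_1,\dots,\tau_n$ under $\pi_n$ are i.i.d.\ Bernoulli with common mean $p := A/(1+A) = 1/(1+C)$. First I would record this i.i.d.\ structure (citing \citep{enaud04large} and Remark~\ref{rem:T0}) as the only probabilistic input needed; no Askey--Wilson machinery is required on the boundary of the fan region because there the stationary distribution collapses to a product measure. Next I would compute the single-site variance: since each $\tau_j$ is Bernoulli($p$),
$$
\var(\tau_j) = p(1-p) = \frac{A}{1+A}\cdot\frac1{1+A} = \frac{A}{(1+A)^2},
$$
so that $\sigma := \sqrt{\var(\tau_j)} = \sqrt A/(1+A)$, which is precisely the multiplicative constant appearing in the statement.

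With the i.i.d.\ structure and the variance in hand, I would invoke Donsker's invariance principle. For i.i.d.\ square-integrable variables with mean $p$ and variance $\sigma^2 = A/(1+A)^2$, the centered, diffusively rescaled partial-sum process
$$
\frac1{\sigma\sqrt n}\ccbb{\summ j1{\floor{nx}}\pp{\tau_j - p}}_{x\in[0,1]} \Rightarrow \ccbb{\B_x}_{x\in[0,1]}
$$
in the Skorokhod space $D([0,1])$, the limit in fact living in $C([0,1])$. Substituting $p = A/(1+A)$ and multiplying both sides by $\sigma = \sqrt A/(1+A)$ yields exactly the claimed convergence to $\tfrac{\sqrt A}{1+A}\ccbb{\B_x}_{x\in[0,1]}$.

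I do not anticipate a genuine obstacle here: essentially all the content sits in the i.i.d.\ Bernoulli fact, after which the conclusion is immediate from Donsker's theorem as stated in \citep{billingsley99convergence}. The only minor point worth flagging is the choice of topology --- one should confirm convergence in $D([0,1])$ rather than merely in the sense of finite-dimensional distributions --- but since the partial-sum step functions converge to a continuous limit and tightness in $D([0,1])$ is standard for i.i.d.\ sums, this causes no difficulty. This theorem should therefore be regarded as the (easy) boundary case that later results interpolate between, with the substantive work deferred to the interior phases where $\tau_1,\dots,\tau_n$ are no longer independent.
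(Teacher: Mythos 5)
Your proposal is correct and matches the paper's own argument: the paper likewise reduces the boundary case $AC=1$ to the i.i.d.\ Bernoulli structure of $\tau_1,\dots,\tau_n$ (justified via the degeneracy of the Askey--Wilson process in Remark~\ref{rem:T0} and the reference to \citep{enaud04large}) and then applies Donsker's theorem, with the constant $\sqrt{A}/(1+A)$ arising as the Bernoulli standard deviation exactly as you compute.
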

One might expects  naturally
 that the Brownian-motion behavior at the boundary of the fan region  persists if one looks at the phase that is close to the boundary.
However,
this intuition is not entirely correct, as the limit is %
non-Gaussian
 for
 $A<1,C<1$ arbitrarily close to the point $(A,C) = (1,1)$ %
 at the boundary, a remarkable result
due to
\citet{derrida04asymmetric}, who showed %
this
 in the special case $q=0, \gamma=\delta = 0$.

\subsection{Main results}

In this paper, we provide a complete picture of the  limit fluctuations of the \heightfunction\  function,
that is,
 of
the process
$\{\summ j1{\floor {nx}} \tau_j \}_{x\in[0,1]}$
with appropriate normalization as $n\to\infty$, in the fan region.
First,
 as conjectured in
 \citep[Section 3]{derrida04asymmetric},
 we show that %
  the limit fluctuation
 for a  full range of parameters \eqref{eq:abcd}   in the maximal current phase
 is    the same as for
  the case of $q=0,\gamma=\delta = 0$ studied in  \citep{derrida04asymmetric}.
Second,
and most interestingly,
we identify two different limit fluctuations at the boundary of the maximal current phase.
 Third,
 in the low/high density phases in the fan region, we show that
 the scaling limit of fluctuations is a Brownian motion.

Our results are stated in terms of Brownian motion, Brownian excursion and Brownian meander,  denoted by $\B$, $\B^{ex}$ and $\B^{me}$ respectively throughout this paper.
One may  think of  Brownian excursion as the Brownian bridge conditioned to stay strictly positive until time $t=1$, and Brownian meander as the Brownian motion conditioned to
stay strictly positive over time interval $(0,1]$.
See for example %
\citep{revuz99continuous,pitman06combinatorial,pitman99brownian,durrett77weak,janson07brownian,yen13local} for more background and applications.

We first state our results on the maximal current phase and its boundary.
Introduce
\[
h_n(x) = \summ j1{\floor {nx}}\pp{\tau_j-\frac12}, x\in[0,1],
\]%
and view $\{h_n(x)\}_{x\in[0,1]}$ as a
stochastic process
  with law induced by $\pi_n$.
The following theorem  extends the already mentioned result of \citet{derrida04asymmetric} to %
 a larger range of parameters \eqref{eq:abcd} %
  confirming the conjecture in \cite[Section 3]{derrida04asymmetric}. We let `$\fddto$' denote convergence of
finite-dimensional distributions. Recall that
definition~\eqref{eq:ABCD} gives $A\ge 0, C\ge 0$.

\begin{theorem}[Maximal current phase]\label{T1}
If $A<1,C<1$ then
\[
\frac1{\sqrt n}\ccbb{h_n(x)}_{x\in[0,1]}\fddto
\frac1{2\sqrt 2}\ccbb{\B_{x} + \B^{ex}_{x}}_{x\in[0,1]},
\]
as $n\to\infty$,
where the Brownian motion $\B$ and the Brownian excursion $\B^{ex}$ are
 independent %
stochastic processes.
\end{theorem}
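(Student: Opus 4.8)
The plan is to prove finite-dimensional convergence by establishing convergence of the joint Laplace transforms of the increments of $h_n$; since each summand $\tau_j-\tfrac12$ is bounded, all exponential moments below are finite, and pointwise convergence of these transforms to the transform of a genuine law yields the asserted convergence in the sense of $\fddto$. Fix $0=x_0<x_1<\cdots<x_d\le 1$ and reals $\lambda_1,\dots,\lambda_d$, write $\Delta_i h_n := \summ{j}{\floor{n x_{i-1}}+1}{\floor{n x_i}}\pp{\tau_j-\tfrac12}$, and set $\Delta_i\B:=\B_{x_i}-\B_{x_{i-1}}$ and $\Delta_i\B^{ex}:=\B^{ex}_{x_i}-\B^{ex}_{x_{i-1}}$. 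The goal is to establish
\[
\EE_{\pi_n}\left[\exp\left(\frac{1}{\sqrt n}\summ{i}{1}{d} \lambda_i\, \Delta_i h_n\right)\right]\longrightarrow \EE\left[\exp\left(\frac{1}{2\sqrt 2}\summ{i}{1}{d}\lambda_i\pp{\Delta_i\B+\Delta_i\B^{ex}}\right)\right].
\]
By independence of $\B$ and $\B^{ex}$ the right-hand side factorizes, the Brownian-motion factor being the explicit Gaussian $\exp\pp{\frac{1}{16}\summ{i}{1}{d}\lambda_i^2 (x_i-x_{i-1})}$ and the other factor being the (classical) joint Laplace transform of the excursion increments.

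First I would rewrite the left-hand side using the representation of the steady-state generating function in terms of the Askey--Wilson Markov process $\mathbb Y=\{Y_t\}$ announced in the abstract. Writing $\exp\pp{\tfrac{\lambda_i}{\sqrt n}(\tau_j-\tfrac12)}=e^{-\lambda_i/(2\sqrt n)}\pp{1+(e^{\lambda_i/\sqrt n}-1)\tau_j}$, which is affine in $\tau_j\in\{0,1\}$, the representation turns the expectation under $\pi_n$ into a normalized expectation of a product $\prod_{j=1}^n\psi_{n,j}(Y_{t_j})$, where each factor $\psi_{n,j}$ is affine in $Y_{t_j}$ with coefficients depending on $e^{\lambda_i/\sqrt n}$ on the $i$-th block, and $\{t_j\}$ are the sampling times prescribed by the representation. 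Expanding $\psi_{n,j}=1+\tfrac{1}{\sqrt n}a_{n,j}(Y_{t_j})+\tfrac1n b_{n,j}(Y_{t_j})+\cdots$ separates a drift term of order $n^{-1/2}$ from a curvature term of order $n^{-1}$; summing the curvature over the $n$ sites should, by a law-of-large-numbers for $\mathbb Y$, converge to the deterministic quadratic exponent that produces the Brownian-motion factor above, while the drift must be carried along with the genuine randomness of $\mathbb Y$.

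The decisive step is the asymptotic analysis of $\mathbb Y$ in the maximal current phase, and I expect it to be the main obstacle. For $A<1,C<1$ the Askey--Wilson orthogonality measures $\nu_t$ are (asymptotically) supported on $[-1,1]$ with density vanishing like a square root at the upper spectral edge, and the relevant space--time scaling forces $Y_{t_j}$ toward that edge; the point is to show that, after rescaling $Y$ away from the edge and reparametrizing time by $j/n\to x$, the rescaled chain converges to a Bessel-type diffusion whose reweighting by the square-root edge density reassembles the drift contribution into the increments of a Brownian excursion. This is the ``tangent process'' mechanism flagged in the keywords: near its edge the conditioned Askey--Wilson chain has the excursion as its scaling limit. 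Concretely, one must make the edge asymptotics of the $q$-deformed Askey--Wilson densities and transition kernels uniform enough to interchange limit and expectation, and then verify that the limiting joint Laplace transform splits into the product of the deterministic Gaussian factor (the independent Brownian motion) and the excursion transform, matching the factorized right-hand side above; establishing this uniformity, together with the requisite tightness of the rescaled chain, is where the real work lies.
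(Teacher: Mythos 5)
Your overall strategy coincides with the paper's: represent the joint Laplace transform of $h_n$ through the Askey--Wilson process via \eqref{eq:BW}, observe that everything is governed by the behavior of $Y_t$ near $t=1$ at the upper edge of its support, and pass to a rescaled ``tangent'' limit there. The deterministic Gaussian factor does emerge essentially as you predict, though it comes from the second-order Taylor expansion of $\cosh(s_k/\sqrt n)$ in the product $\prod_k(\cosh(s_k/\sqrt n)+Y_{e^{-2s_k/\sqrt n}})^{n_k-n_{k-1}}$ rather than from any law of large numbers for $Y$; and the square-root vanishing of the Askey--Wilson density at the edge is indeed what singles out the excursion in the regime $A<1,C<1$.

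There are, however, two genuine gaps. The decisive one is your assertion that the reweighted edge limit ``reassembles into the increments of a Brownian excursion.'' The tangent process $\Z$ is not a Bessel-type diffusion: it is a $1/2$-self-similar Markov process with the explicit heavy-tailed (Cauchy-like) transition density \eqref{eq:Z_pdf}, and nothing in its local structure makes the excursion visible. The identification rests on the duality identity \eqref{eq:ex} of \citep{bryc18dual}, which expresses the joint Laplace transform of $(\B^{ex}_{x_1},\dots,\B^{ex}_{x_d})$ as $\frac1{\sqrt{2\pi}}\int_{\R_+}u^{1/2}\,\esp_u\exp\left(-\frac12\sum_{k}\Z_{s_k}(x_k-x_{k-1})\right)du$; without this external input the limit you obtain is an unidentified positive functional of $\Z$, and there is no route in your sketch to recognize it as the excursion. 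Second, your opening claim that pointwise convergence of the Laplace transforms immediately yields the f.d.d.\ limit is too quick: the convergence is established only for strictly positive arguments $c_1,\dots,c_d>0$ (the times $e^{-2s_k/\sqrt n}$ must be ordered in $(0,1]$ and the dominated-convergence bounds require $s_k>0$), i.e.\ on an open set bounded away from the origin, so one needs the less standard criterion of Theorem \ref{Thm-Laplace} rather than the usual one. A minor further point: tightness of the rescaled chain is not required, since only finite-dimensional convergence is claimed and the paper obtains it from pointwise convergence of the transition densities together with Scheff\'e's theorem.
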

The boundary of the maximal current phase, see
Figure \ref{Fig3}, splits into three regions
with
different %
limit
 fluctuations: the corner  point where  $A=1,C=1$
with asymptotically Brownian fluctuations described in Theorem \ref{T0}, and
two line-segments  corresponding to $A<1, C=1$    and $A=1, C<1$ with the following fluctuations.
\begin{theorem}[Boundary of maximal current phase]\label{T1'}
We have,
\[
\frac1{\sqrt n}\ccbb{h_n(x)}_{x\in[0,1]}\stackrel{\rm f.d.d.}{\Longrightarrow}
\left\{
\begin{array}{ll}
\displaystyle \frac1{2\sqrt 2}\ccbb{\B_{x} + \B^{me}_{x}}_{x\in[0,1]} & A=1, C<1 \\  \\
  \displaystyle \frac1{2\sqrt 2}\ccbb{\B_{x} + \B^{me}_{1-x}-\B^{me}_{1}}_{x\in[0,1]} & A<1, C=1  \\ %
\end{array}
\right.
\]
as $n\to\infty$,
where the Brownian motion $\B$ and the Brownian meander  $\B^{me}$ are
independent %
stochastic processes.
\end{theorem}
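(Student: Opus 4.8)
The plan is to derive both lines of Theorem~\ref{T1'} from the same Askey--Wilson representation of the joint generating function that underlies Theorem~\ref{T1}, now specialized to the critical value of one boundary parameter. Since the assertion is convergence of finite-dimensional distributions, it suffices to fix $0=x_0<x_1<\cdots<x_m\le 1$ and reals $\la_1,\dots,\la_m$ and to identify the limit of
\[
\EE_{\pi_n}\bb{\exp\pp{\frac1{\sqrt n}\summ k1m \la_k\, h_n(x_k)}}.
\]
First I would insert the scaling $s_j=\la(j)/\sqrt n$, with weights $\la(j)$ piecewise constant on the blocks $(\floor{nx_{k-1}},\floor{nx_k}]$, into the representation of $\EE_{\pi_n}[\exp(\summ j1n s_j\tau_j)]$ as a normalized moment of the Askey--Wilson Markov process, and reduce the problem to the large-$n$ asymptotics of a moment functional of that process against the Askey--Wilson orthogonality measures with parameters $A,B,C,D,q$.

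The heart of the argument, and the step I expect to be the main obstacle, is the asymptotic analysis of this functional at $A=1$. In the maximal current phase $A<1,C<1$ of Theorem~\ref{T1}, the dominant contribution comes from a neighborhood of the upper spectral edge of the Askey--Wilson measure, and the diffusively rescaled process there converges --- as the \emph{tangent process} at the edge --- to a Bessel-type process pinned at both ends, producing the Brownian excursion $\B^{ex}$. The value $A=1$ is precisely the threshold at which a discrete mass point of the Askey--Wilson measure is about to detach from the edge (its presence for $A>1$ is what drives the high density phase). At this critical value the pinning associated with the $A$-reservoir endpoint is released, so I expect the tangent process to be pinned at $x=0$ but free at $x=1$, i.e.\ the Brownian meander $\B^{me}$. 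Making this rigorous requires the local behavior of the Askey--Wilson weight and transition kernel near the edge, \emph{uniformly} in the scaling, together with tail estimates that justify interchanging the limit with the integral and that confine the mass to the near-edge region. The delicate point is the uniform control exactly at $A=1$, where the edge behavior is borderline and the soft-edge bounds available for $A<1$ degrade.

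Once the limiting functional is computed, I would factor it into a bulk part and a boundary part. The bulk part, coming from the local Bernoulli-type fluctuations of the $\tau_j$, contributes a Gaussian factor, namely the Laplace transform of a Brownian motion; the boundary part is the finite-dimensional Laplace transform of a scaled meander, which I would recognize through the standard description of $\B^{me}$ as Brownian motion conditioned to stay positive on $(0,1]$ (equivalently via its killed-kernel / Bessel$(3)$ finite-dimensional densities). Together with the prefactor these combine to $\frac1{2\sqrt2}\ccbb{\B_x+\B^{me}_x}$, and the product structure of the limit yields the independence of $\B$ and $\B^{me}$ asserted in the theorem, giving the first line.

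Finally, the case $A<1,C=1$ reduces to the case $A=1,C<1$ via the combined particle--hole and space-reversal symmetry of the open ASEP. This symmetry preserves $q$ and interchanges the two boundaries, so in the parameterization~\eqref{eq:ABCD} it swaps $A\leftrightarrow C$, while at the level of the steady state it sends $\tau_j\mapsto 1-\tau_{n+1-j}$, hence (up to the $O(1/n)$ index shift) $h_n(x)\mapsto h_n(1-x)-h_n(1)$. Thus the $A<1,C=1$ fluctuation process is equidistributed with the reflection $x\mapsto\frac1{\sqrt n}\pp{h_n(1-x)-h_n(1)}$ of the already established $A=1,C<1$ process. Passing to the limit, $\B_{1-x}-\B_1$ is again a Brownian motion and $\B^{me}_{1-x}-\B^{me}_1$ is by definition the reversed meander $\what\B^{me}$, with the two independent, which yields the second line.
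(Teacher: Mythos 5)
Your overall architecture matches the paper's: Askey--Wilson representation of the generating function, diffusive rescaling near the spectral edge, factorization of the limit into a Brownian part and a boundary part, and particle--hole plus space-reversal duality to get the $A<1,C=1$ line from the $A=1,C<1$ line. The last step is essentially identical to Section~\ref{sec:A<1C=1}. However, your description of the mechanism at the critical step is not the one that actually works, and it hides the key identity. In the paper the tangent process at the edge is the \emph{same} self-similar Markov process $\Z$ (with kernel \eqref{eq:Z_pdf}) in both regimes $A<1,C<1$ and $A=1,C<1$; it is not ``pinned at both ends'' in one case and ``released at one end'' in the other. What changes at $A=1$ is the limiting initial density of the rescaled edge coordinate: $u^{1/2}\,du$ for $A<1,C<1$ versus $u^{-1/2}\,du$ for $A=1,C<1$ (Proposition~\ref{prop:tangent}, \eqref{eq:what-pi}), together with the order of normalization ($n^{-3/2}$ versus $n^{-1/2}$). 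The excursion versus meander dichotomy then comes entirely from the duality formulas \eqref{eq:ex} and \eqref{eq:me}, which express the excursion and meander Laplace transforms as integrals of the \emph{same} functional of $\Z$ against $u^{1/2}\,du$ and $u^{-1/2}\,du$ respectively. Your plan to ``recognize'' the boundary factor via the Bessel(3)/killed-kernel description of the meander is not wrong in principle, but it amounts to re-deriving \eqref{eq:me}; without that identity (or an equivalent explicit computation matching the limit functional of $\Z$ to the meander's finite-dimensional Laplace transform) the identification is asserted rather than proved. This is the main quantitative ingredient your proposal does not supply.

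A second concrete gap: you test against $\exp\spp{n^{-1/2}\sum_k\la_k h_n(x_k)}$ with arbitrary real $\la_k$. The representation \eqref{eq:BW} requires ordered arguments $0<t_1\le\cdots\le t_n$, which after the substitution $t_j=e^{-2s_j/\sqrt n}$ forces $s_1>\cdots>s_d>0$, i.e.\ all $c_k>0$. Consequently the method only yields convergence of the Laplace transform on an open set bounded away from the origin, and one cannot invoke the standard continuity theorem; the paper needs the criterion of Theorem~\ref{Thm-Laplace} (convergence of Laplace transforms on an open set off the origin, whose limit is itself a Laplace transform, implies weak convergence). Your proposal silently assumes a two-sided transform near $\vv 0$, which the representation does not give you. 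Finally, the uniform edge estimates you correctly flag as delicate are exactly the bounds \eqref{eq:pi_bd} and \eqref{eq:pihat_bd} plus the exponential bound \eqref{eq:Gn_bd} feeding dominated convergence; note also the extra care needed when $x_d=1$, where the paper conditions at time $s_d$ instead of $0$ and uses the identity \eqref{eq:Z_A=1} to restore the $u^{-1/2}\,du$ form.
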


For the low/high density phases,
 we  use centering %
 as indicated in \eqref{limH}.
 For $x\in[0,1]$,
 introduce %
    \begin{align}
h_n^{\rm L}(x) & =\summ j1{\floor{nx}} \pp{\tau_j-\frac{1}{1+C}}\nonumber\\
    h_n^{\rm H}(x) & =\summ j1{\floor{nx}}\pp{\tau_j-\frac{A}{1+A}},\label{eq:h_HD}
    \end{align}
    and view both as stochastic processes   with laws induced by $\pi_n$.
 \begin{theorem}[Low/high density phases of fan region]\label{T2}
Suppose $AC<1$.
In the low density phase,  %
$C>1$, we have
 \begin{equation*}%
    \frac{1}{\sqrt{n}}\ccbb{h^{\rm L}_n(x)}_
    {x\in[0,1]}
    \stackrel{\rm f.d.d.}
  {\Longrightarrow}  %
    \frac{\sqrt{C}}{1+C}
\ccbb{\B_x}_
{x\in[0,1]}
 \mbox{ as $n\to\infty$}.
  \end{equation*}
In the high density phase, %
$A>1$, we have
  \begin{equation*}
   \frac{1}{\sqrt{n}} \ccbb{ h^{\rm H}_n(x)}_
   {x\in[0,1]}
   \stackrel{\rm f.d.d.}
  {\Longrightarrow}  %
  \frac{\sqrt{A}}{1+A}
  \ccbb{\B_x}_
{x\in[0,1]}
  \mbox{ as $n\to\infty$}.
  \end{equation*}
 \end{theorem}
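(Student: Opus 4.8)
The plan is to establish the claimed finite-dimensional convergence by reducing it to a statement about increments and then evaluating the relevant joint generating function through the Askey--Wilson process representation. Note that Theorem~\ref{T2} only asserts convergence of finite-dimensional distributions (`$\fddto$'), so no tightness argument is needed. Fix $0 = x_0 < x_1 < \cdots < x_d \le 1$ and, in the low density case, set
\[
\Delta_i = \sum_{j = \floor{n x_{i-1}}+1}^{\floor{n x_i}} \pp{\tau_j - \frac{1}{1+C}}, \qquad i = 1,\dots,d.
\]
Since Brownian motion has independent Gaussian increments, it suffices to prove that $n^{-1/2}(\Delta_1,\dots,\Delta_d)$ converges to a vector of independent centered Gaussians with $\var(\Delta_i)/n \to \frac{C}{(1+C)^2}(x_i - x_{i-1})$. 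The target variance coefficient $\frac{C}{(1+C)^2}$ is exactly the variance of a single $\mathrm{Bernoulli}(1/(1+C))$ variable; this encodes the heuristic that in the low density phase the bulk occupations behave asymptotically like i.i.d.\ Bernoulli with the limiting density $1/(1+C)$ from~\eqref{limH}, so that the answer is morally Donsker's theorem in disguise.

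First I would write the joint generating function $\EE_{\pi_n}\bb{\prod_{i=1}^d \exp(u_i \Delta_i/\sqrt n)}$, that is, take $t_j = \exp(u_i/\sqrt n)$ for each $j$ in the $i$-th block in $\prod_j t_j^{\tau_j}$ and peel off the deterministic centering. Into this I would substitute the Askey--Wilson process representation of the joint generating function of $\tau_1,\dots,\tau_n$. This converts the quantity into a ratio of expectations over the Askey--Wilson Markov process, where the partition into blocks is reflected in a product of transition factors evaluated between the consecutive times $\floor{nx_{i-1}}$ and $\floor{nx_i}$.

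The core of the argument is then a Laplace-type asymptotic analysis of this ratio in the regime $C>1$, $C>A$. In the low density phase the Askey--Wilson orthogonality measure carries a dominant atom (present precisely when $C>1$), and the representing expectation localizes at this atom, which corresponds to density $1/(1+C)$. Expanding each block factor around this point and retaining terms up to order $n^{-1}$ inside the exponent yields, for each block, a Gaussian factor $\exp\pp{\tfrac12 u_i^2 \tfrac{C}{(1+C)^2}(x_i - x_{i-1})}$, while the localization forces the contributions of distinct blocks to factorize. Collecting these gives the limiting Laplace transform of $\frac{\sqrt C}{1+C}\ccbb{\B_x}$, and the factorization yields the asymptotic independence of the increments.

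The main obstacle is precisely this asymptotic analysis: one must justify the localization at the atom and the order-$n^{-1/2}$ expansion uniformly across the $d$ blocks, controlling both the error terms and the normalizing denominator so that the cross-block corrections vanish while the per-block Gaussian contributions survive; this is where the detailed behavior of the Askey--Wilson transition kernel near the relevant endpoint of the support is essential. Finally, the high density statement follows from the low density one by the particle--hole symmetry $\tau_j \mapsto 1 - \tau_{n+1-j}$, which exchanges $A$ and $C$ and carries $h_n^{\rm H}$ to a reflection and negation of $h_n^{\rm L}$; since Brownian motion is invariant in law under $x\mapsto 1-x$ followed by negation, the covariance is unchanged and the claimed limit with coefficient $\frac{\sqrt A}{1+A}$ results.
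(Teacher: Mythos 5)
Your overall strategy is the paper's: reduce to the Laplace/generating transform of the finite-dimensional distributions, substitute the Askey--Wilson representation \eqref{eq:BW}, localize the resulting expectation at the dominant atom of the orthogonality measure (whose presence is exactly the signature of the low/high density phases), extract a per-site Bernoulli expansion giving the Gaussian factors $\exp\bigl(\tfrac12 u_i^2 \tfrac{C}{(1+C)^2}(x_i-x_{i-1})\bigr)$, and transfer to the other phase by particle--hole duality. The constants you identify are correct, and restricting to finite-dimensional convergence (no tightness) is consistent with the statement.

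The one substantive difference is that you localize directly in the \emph{low} density phase and deduce the high density phase by duality, whereas the paper does the opposite, and this swap is not innocent. The reason the paper treats the high density phase directly is that the atom relevant there is generated by the parameter $A\sqrt t$, which also appears as a parameter of the forward transition kernel \eqref{Y-transitions}; this is what makes the computation $\proba(Y_t=y_0(t)\mid Y_s=y_0(s))=1$ work, so that on the atom event the whole product in the numerator becomes deterministic and the block contributions factorize trivially. In the low density phase the dominant atom near $t=1$ is generated by $C/\sqrt t$, and under the \emph{forward} dynamics the process leaves that atom with positive probability (the analogous conditional atom mass is a ratio of $q$-Pochhammer symbols that is generically $\ne 1$). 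So your claim that ``the localization forces the contributions of distinct blocks to factorize'' does not follow as stated: you would need to condition at the latest time (where $Z_n=\esp(1+Y_1)^n$ localizes) and propagate the atom \emph{backward} in time, or else simply interchange which phase you prove directly. This is a fixable technical point rather than a conceptual error, but it is precisely the asymmetry that dictates the paper's ordering of the two cases.
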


The paper is organized as follows. In Section \ref{sec:overview} we describe informally
the basic ideas behind the proof. Section~\ref{sec:prelim}
provides technical background on Askey--Wilson processes and generating functions of ASEP.
In Section~\ref{sec:T2} we prove Theorem~\ref{T2}. Section~\ref{sec:T1} presents %
proofs of
Theorems~\ref{T1} and~\ref{T1'}. In Appendix \ref{appendix:Laplace} we discuss the Laplace transform criterion for
  weak convergence that we use.

\subsection{Overview of the  proof}
\label{sec:overview}
Our starting point is the identity
\equh\label{eq:BW0}
\aa{\prod_{j=1}^n t_j^{\tau_j}}_n=\frac{\esp\bb{\prod_{j=1}^n(1+t_j+2\sqrt{t_j} Y_{t_j})}}{2^n\esp(1+Y_1)^n}, \mfa 0<t_1\le t_2\le\cdots\le t_n,
\eque
which expresses the probability generating function of ASEP on the left-hand side  as a functional of an auxiliary Markov process $\{Y_t\}_{t\ge 0}$.
The process $\{Y_t\}_{t\ge 0}$, introduced in
\citep{bryc10askey}, is an inhomogeneous Markov process
 with transition probabilities constructed
 form the Askey--Wilson laws, that is, from the ``weight functions" of the Askey--Wilson polynomials \citep{askey85some}, as  described in Section \ref{sec:prelim-AW}.
   The parameters $A,B,C,D$ introduced in~\eqref{eq:ABCD}  are the parameters of
this process and our notation here is consistent with \citep{bryc10askey,bryc17asymmetric}.
Identity~\eqref{eq:BW0} comes from  \citep{bryc17asymmetric} and is a new representation of the matrix ansatz,
which  is a powerful and commonly used method  developed in the %
seminal work of \citet{derrida93exact}.
Our approach, however, is of an analytical nature that is different from most applications of the matrix ansatz to the ASEP in the literature (see Remark \ref{rem:comparison}).

Theorems~\ref{T1},~\ref{T1'} and \ref{T2}  %
are
established by
representing
the Laplace transforms of %
the
 finite-dimensional distributions of
processes %
$h_n$, %
and $h^{\rm H}_n$,
 normalized by $\sqrt{n}$, in terms of this auxiliary Markov process  $\{Y_t\}_{t\ge 0}$.
 In this representation, which is a straightforward application of \eqref{eq:BW0},  see
  \eqref{eq:Lap-EZ} and \eqref{eq:Laplace}, the arguments of the Laplace transform become  time arguments for the Markov process. This reduces the
  study of fluctuations of ASEP as the system size $n$ increases  to the analysis of asymptotic behavior of   Markov process   $\{Y_t\}_{t\ge 0}$ near $t=1$.
  In the case of the low/high density regimes, this then leads to a quick proof for the limit fluctuations (Theorem~\ref{T2}).
 The proof for the maximal current phase and its boundary is  more involved  and   requires two %
 additional
 ingredients that %
 we now explain.

   The first ingredient is  the so-called {\em tangent process} \cite{falconer03local} at the
   upper
   boundary of
   the
   support
   of  process $\{Y_t\}_{t\ge 0}$.
   The tangent process, denoted by %
   $\{\Z_t\}_{t\ge0}$,
    is a positive $1/2$-self-similar Markov process with explicit transition probability density function (see Section \ref{sec:Z})
and arises as follows.
Intuitively, the tangent process captures the asymptotic fluctuations of the process $\{Y_t\}_{t\ge 0}$, as the time parameter $t$ is approaching 1 and $Y_1$ is approaching the upper boundary end of the support $[-1,1]$.
To utilize this concept,
 we introduce a sequence of  Markov processes
$\{\what Y\topp n_s\}_{s\geq 0}$  which up to a multiplicative constant behave roughly like  $\{(1-Y_{1-\eps
s})/\eps^2\}_{s\geq 0}$ for $\eps^2\sim 1/n$, (for precise definition, see \eqref{eq:Yhat} below). In Proposition
\ref{prop:tangent} we   show that as $\eps\to 0$ we have \equh\label{eq:tangent0} \mathcal{L}\pp{\ccbb{\what Y_{s}\topp
n}_{s\ge 0}\mmid \what Y_0\topp n = u} \fddto \mathcal{L}\pp{\ccbb{\Z_{s}}_{s\ge 0}\mmid \Z_0 = u}, \mfa u>0. \eque
The left-hand side is interpreted as the law of the Markov process $\{\what Y_s\topp n\}_{s\ge 0}$ starting at $\what Y_0\topp n = u$;  similar interpretation applies for the right-hand side.
In the second part of Proposition \ref{prop:tangent} we show that
under appropriate normalization, the density of $\what Y_{0}\topp n$ converges to an infinite measure
$\nu(du)$ which is proportional to either $u^{1/2}du $ (in the case $A<1, C<1$) or  $u^{-1/2}du $ (in the case $A=1, C<1$). %

Up to a normalizing constant, the Laplace transform   of the
  finite-dimensional distributions of $\{h_n(x)\}_{x\in[0,1]}$
 takes the form of
$\esp G_n\spp{\what Y_{s_1}\topp n,\dots,\what Y_{s_{d}}\topp n ,\what Y_0}$ for some $s_1>\cdots>s_d>0$, where the
 sequence of functions $G_n:\R^{d+1}\to \R_+$, converges  to function $G$ defined in \eqref{eq:LimG_n}.
 Convergence in \eqref{eq:tangent0} is a key step  to show that these  Laplace transforms converge to the limit %
   given by the functional
$$ \int_{\R_+^{}}\esp( G(\Z_{s_1},\dots,\Z_{s_{d}},u)\mid \Z_0=u)\nu(du)
$$
of the tangent process $\Z$, see \eqref{lim-phi}.

 The second ingredient of our proof consists of some recently developed duality formulas \citep{bryc18dual} that express the Laplace transforms of Brownian excursion
  and meander in terms of the tangent process %
 $\{\Z_t\}_{t\ge 0}$ (\eqref{eq:ex} and \eqref{eq:me}).
We recognize that the integral above has   two factors:  the Laplace transform of  the Brownian motion, and
a functional of $\{\Z_s\}_{s\ge 0}$
which we  identify as the  Laplace transform of
Brownian excursion ($A<1,C<1$), see  \eqref{eq:EG}, or of
Brownian meander ($A=1,C<1$), see  \eqref{eq:EG2}.
 A delicate issue actually arises here as due to the use of Markov process  we establish convergence of the Laplace transforms  only in an open region away from the origin in $\Rd$.
We clarify  how this leads to the desired weak convergence   in Appendix \ref{appendix:Laplace}.

Technical difficulties arise in  the above approach  when transitions probabilities of $\{Y_t\}_{t\ge 0}$
are of mixed type near $t=1$.
 We avoid this issue
by applying the so-called
particle-hole duality,
which is a well known symmetry feature of the ASEP.
In particular, the case $A<1, C=1$ and the low density phase will be derived from the case $A=1, C<1$ and the high density phase, respectively, by this duality.

\begin{remark}
   In principle, %
   our
   approach might work for the weakly asymmetric exclusion process, as in  \cite{derrida05fluctuations}, where the authors consider
 the case $q\uparrow1$ at a rate that may depend on $n\to\infty$  and show  that the fluctuations are Gaussian.
 This would require to determine first the relevant tangent process as $q\uparrow1$.

We also mention that there is a huge literature on  the  asymptotic behavior  of ASEP as a temporal-spatial process, by letting the ASEP to
evolve from
a non-stationary distribution, and %
possibly with $q\uparrow 1$ %
at the rate that may depend on $n\to\infty$.
See for example
\citep{eyink90hydrodynamics,eyink91lattice,goncalves17nonequilibrium,corwin16open}
and references therein.
Such results are beyond the scope of  %
our methods.
\end{remark}

\section{Askey--Wilson process and ASEP}\label{sec:prelim}
\subsection{Askey--Wilson process}\label{sec:prelim-AW}

Askey--Wilson processes are a family of Markov processes based on Askey--Wilson measures, which we recall first.
The Askey--Wilson measures are the probability measures that make  the
Askey--Wilson polynomials orthogonal.
We do not use these polynomials here, and instead we write directly the orthogonality measure
 as
given in
\cite{askey85some},
see also \citep[Section 3.1]{koekoek96askey}
where a typo to   weight of higher atoms is corrected.
The formulas below incorporate this correction and probabilistic normalization, and come from \cite{bryc10askey}.

The Askey--Wilson probability measure $\nu(dy;a,b,c,d,q)$ depends on %
 five
parameters $a,b,c,d,q$.
It is assumed that $q\in(-1,1)$. %
For the   parameters $a,b,c,d$, it is assumed that they are all real, or two of the parameters are real and the other two form a complex conjugate pair,
or the parameters form two complex conjugate pairs,
and in addition
\begin{equation}\label{eq:restriction}
ac,ad,bc,bd,qac,qad,qbc,qbd, abcd, qabcd \not \in [1,\infty).
\end{equation}
The Askey--Wilson measure is invariant with respect to permutations of %
$a,b,c,d$.
More precisely,
the measure is of mixed type
$$
\nu(dy;a,b,c,d,q)=f(y;a,b,c,d,q)dy+\sum_{z\in F(a,b,c,d,q)}\,p(z)\delta_z(dy),
$$
with the absolutely continuous part supported on $[-1,1]$ and with the discrete part supported on a finite or empty set $F$.
For certain choices of parameters, the measure can be only discrete or only absolutely continuous.
The absolutely continuous part is %
\begin{equation}\label{eq:f}
  f(y;a,b,c,d,q)=\frac{\qp{q,ab,ac,ad,bc,bd,cd}}{2\pi\qp{abcd}\sqrt{1-y^2}}\,\left|\frac{\qp{e^{2i\theta_y}}}
{\qp{ae^{i\theta_y},be^{i\theta_y},ce^{i\theta_y},de^{i\theta_y}}}\right|^2,
\end{equation}
where $y=\cos\,\theta_y$
(with the convention that $f(y;a,b,c,d,q)=0$ when $|y|>1$).
Here and below, for complex $\alpha$,  $n\in\N\cup\{\infty\}$ and $|q|<1$ we
use the
$q$-Pochhammer symbol
\begin{equation}
  \label{eq:Pochhamer}
\qps{\alpha}_n=\prod_{j=0}^{n-1}\,(1-\alpha q^j), \quad \qps{a_1,\cdots,a_k}_n =\prodd j1k\qps{a_j}_n.
\end{equation}
The set $F=F(a,b,c,d,q)$  of atoms of $\nu(dy;a,b,c,d,q)$ is   non-empty if  there is a parameter $\alpha\in\{a,b,c,d\}$  with
$|\alpha|> 1$. In this case, necessarily $\alpha$ is real and generates atoms: for example,
 if $|a|> 1$ then it generates the atoms
\begin{equation}
  \label{eq:atoms}
  y_j=\frac12\pp{aq^j+\frac1{aq^j}} \mbox{ for $j=0,1,\dots$ such that $|aq^j|\ge 1$},
\end{equation}
and the corresponding masses are
\begin{align}
 p(y_0;a,b,c,d,q) & =\frac{\qp{a^{-2},bc,bd,cd}}{\qp{b/a,c/a,d/a,abcd}},\; \label{eq:p0}\\
p(y_j;a,b,c,d,q) & =p(y_0;a,b,c,d,q)\frac{\qps{a^2,ab,ac,ad}_j\,(1-a^2q^{2j})}{\qps{q,qa/b,qa/c,qa/d}_j(1-a^2)}\left(\frac{q}{abcd}\right)^j,
j\ge 1.\nonumber
\end{align}
The formula of $p(y_j;a,b,c,d,q)$ given here only applies for $a,b,c,d\ne 0$, and takes a different form otherwise. We  shall however  only need $p(y_0;a,b,c,d,q)$ in this paper.

The Askey--Wilson process is a time-inhomogeneous Markov process introduced in \citet{bryc10askey}, based on Askey--Wilson measures. It is then explained in~\citet{bryc17asymmetric} how each ASEP with parameters $\alpha,\beta>0, \gamma,\delta\ge 0, q\in[0,1)$ is associated to an Askey--Wilson process $Y$, the parameters of which are denoted by $A,B,C,D,q$, with $A,B,C,D$ given in~\eqref{eq:ABCD}.

As we already noted, \eqref{eq:ABCD} implies $A,C\geq 0$ and $-1<B,D\leq 0$.
So for the Askey--Wilson process to exist, the restriction~\eqref{eq:restriction} becomes $AC<1$, which we assume throughout in the sequel. Then, the Askey--Wilson process with parameters $(A,B,C,D,q)$ is introduced as the Markov process with marginal  distribution
\begin{equation*}%
\proba(Y_t\in dy)=\nu\pp{dy;A\sqrt{t},B\sqrt t,C/\sqrt{t},D/\sqrt t,q},\quad 0<t<\infty,
\end{equation*}  and the transition probabilities
\begin{equation}
\label{Y-transitions}
\proba(Y_t\in dz\mid Y_s=y)=\nu\pp{dz;A\sqrt{t},B\sqrt t,\sqrt{s/t}(y+\sqrt{y^2-1}),\sqrt{s/t}(y-\sqrt{y^2-1})},
\end{equation}
for $0<s<t$, $y,z>0$.
When $|y|<1$, $y\pm \sqrt{y^2-1}$ is understood as $e^{\pm i\theta_y}$ with $\theta_y$ determined by $\cos\theta_y = y$. It was shown in \citep{bryc10askey} that the above marginal and transition laws are consistent and determine a Markov process indexed by $t\in[0,\infty)$.
The Askey--Wilson process turned out to be closely related  to a large family of Markov processes,  the so-called quadratic harnesses \citep{bryc07quadratic} in the literature; see \citep[Section 1.3]{bryc17asymmetric} for more on this connection.
More explicit expressions for the law of $Y$ will  appear below when they are needed in the proofs.

\subsection{Generating function of ASEP via Askey--Wilson process}
Let $\aa \cdot _n$ denote the expectation with respect to
the invariant measure  $\pi_n$ of ASEP.
\citet{derrida93exact} derives the well known  matrix ansatz method that
 provides    an explicit expression of the joint generating function, which made many calculations of the model possible. Formally,
  for any $t_1,\dots,t_n>0$,
 from \citep{derrida93exact}
 one can write %
\begin{equation}\label{MatrixAnsatz}
\aa{\prodd j1n t_j^{\tau_j}}_n = \frac{\langle W|(\mathsf{E}+t_1\mathsf{D})\times\cdots\times(\mathsf{E}+t_n\mathsf{D})|V\rangle}{\langle W|(\mathsf{E}+\mathsf{D})^n|V\rangle},
\end{equation}
for a pair of infinite matrices $\mathsf{D},\mathsf{E}$, a row vector $W$ and a column vector $V$, satisfying
\begin{align*}
\mathsf {DE} - q\mathsf{ED} & = \mathsf {D} + \mathsf {E},\\
\langle W|(\alpha \mathsf E - \gamma \mathsf D) & = \langle W|,\\
(\beta\mathsf D - \delta\mathsf E)|V\rangle & = |V\rangle.
\end{align*}
See \citep{derrida06matrix,derrida07nonequilibrium} for reviews of literature. However, for our purpose, we shall apply an alternative expression developed recently
in \citep[Theorem 1]{bryc17asymmetric}, summarized
in
 the following theorem.
\begin{theorem}\label{T-BW17}
Consider the parameterization $A,B,C,D$ in~\eqref{eq:ABCD} for an ASEP with parameters $\alpha,\beta>0,\gamma,\delta\ge 0$.
Suppose that $AC<1$ and $q\in[0,1)$. Then for $0<t_1\leq t_2\leq \dots\leq t_n$, the joint generating function of
the stationary distribution of the ASEP with parameters is
  \begin{equation}
 \label{eq:BW}
\aa{\prod_{j=1}^n t_j^{\tau_j}}_n=\frac{\esp\bb{\prod_{j=1}^n(1+t_j+2\sqrt{t_j}Y_{t_j})}}{2^n\esp(1+Y_1)^n},
  \end{equation}
  where $\{Y_t\}_{t\ge 0}$ is the Askey--Wilson process with parameters $(A,B,C,D,q)$.
\end{theorem}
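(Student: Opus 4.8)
The plan is to derive the Askey--Wilson representation \eqref{eq:BW} from the matrix ansatz \eqref{MatrixAnsatz}, by realizing the DEHP operators $\mathsf{D},\mathsf{E}$ and the boundary vectors $\langle W|,|V\rangle$ concretely and recognizing the resulting vacuum expectations as moments of the process $\{Y_t\}_{t\ge 0}$. First I would fix an explicit representation of the quadratic algebra $\mathsf{D}\mathsf{E}-q\mathsf{E}\mathsf{D}=\mathsf{D}+\mathsf{E}$ together with the two boundary conditions, following the tridiagonal construction of \citep{uchiyama04asymmetric}: on $\ell^2(\N)$ the combination $\mathsf{D}+\mathsf{E}$ becomes the Jacobi matrix whose orthogonal polynomials are the Askey--Wilson polynomials with parameters $A,B,C,D,q$ from \eqref{eq:ABCD}, and $\langle W|\,\cdot\,|V\rangle$ computes the integral against the probabilistically normalized Askey--Wilson measure $\nu(\,\cdot\,;A,B,C,D,q)$. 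Writing the spectral variable as $y=\cos\theta$, the key algebraic observation is the factorization $1+t+2\sqrt t\,y=(1+\sqrt t\,e^{i\theta})(1+\sqrt t\,e^{-i\theta})=|1+\sqrt t\,e^{i\theta}|^2$, which is precisely the weight factor that the transfer operator $\mathsf{E}+t\mathsf{D}$ will produce in the spectral picture against the time-$t$ measure.

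The heart of the argument is to telescope the product $\langle W|\prod_{j=1}^n(\mathsf{E}+t_j\mathsf{D})|V\rangle$ into a single expectation over the Markov process $Y$. Introducing the number operator $\mathsf{N}$, diagonal in the chosen basis, I would show that conjugation by $t^{\mathsf{N}/2}$ rescales the Askey--Wilson parameters as $A\mapsto A\sqrt t$, $B\mapsto B\sqrt t$, $C\mapsto C/\sqrt t$, $D\mapsto D/\sqrt t$, which is exactly the time dependence of the marginal law $\proba(Y_t\in dy)=\nu(dy;A\sqrt t,B\sqrt t,C/\sqrt t,D/\sqrt t,q)$; note the reassuring invariant $A\sqrt t\cdot C/\sqrt t=AC$, so the restriction \eqref{eq:restriction} holds at every time. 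Inserting between consecutive factors the resolution of the identity in the Askey--Wilson polynomial basis, and using the ordering $t_1\le\cdots\le t_n$ to match the transition kernel \eqref{Y-transitions} (whose last two parameters are $\sqrt{s/t}(y\pm\sqrt{y^2-1})$ for $0<s<t$, with $s=t_j<t=t_{j+1}$), the product collapses into the consistent family of Askey--Wilson transition laws of \citep{bryc10askey}. Each factor then contributes its weight $1+t_j+2\sqrt{t_j}\,Y_{t_j}$, giving the numerator of \eqref{eq:BW}, while the denominator is the special case $t_1=\cdots=t_n=1$, where $1+1+2Y_1=2(1+Y_1)$ produces the factor $2^n\esp(1+Y_1)^n$.

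The main obstacle is making this telescoping rigorous rather than formal, and two points require care. First, the measures here are of mixed type: when $A>1$ or $C>1$ the measure $\nu$ carries atoms as in \eqref{eq:atoms}, so the spectral expansion of $\mathsf{D}+\mathsf{E}$ and the inserted resolutions of the identity must account for the discrete spectrum, and one must check that the same atomic structure is reproduced consistently at every time $t_j$ under the rescaling $A\mapsto A\sqrt t$. Second, the operators are unbounded infinite matrices, so the existence of the vacuum expectations, the legitimacy of inserting resolutions of the identity, and the interchange of the finite matrix product with integration against $\nu$ all need the standing assumption $AC<1$ (which guarantees \eqref{eq:restriction} and hence existence of $Y$) together with summability estimates on the tridiagonal entries. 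An alternative that lightens some of this bookkeeping is an induction on $n$: peel off the last factor $(\mathsf{E}+t_n\mathsf{D})$, apply the Markov property of $Y$ at the largest time $t_n$ via a one-step identity, and reduce to $n-1$; the only genuinely analytic input then is the single one-step computation, though it still rests on the same mixed-type spectral analysis.
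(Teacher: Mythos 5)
This theorem is not proved in the present paper: it is quoted from \citep[Theorem 1]{bryc17asymmetric}, so the comparison is really with the proof given there. Your route --- realize $\mathsf D,\mathsf E$ as the tridiagonal operators of \citep{uchiyama04asymmetric}, observe that $\mathsf E+t\mathsf D$ is (up to a diagonal similarity leaving $\langle W|\cdots|V\rangle$ invariant) the Jacobi matrix of the Askey--Wilson polynomials with parameters $(A\sqrt t,B\sqrt t,C/\sqrt t,D/\sqrt t)$, and then telescope the ordered product into the Markov process --- is essentially the route taken there; in the cited proof the telescoping is organized through the martingale property of the orthogonal polynomials, $\esp[p_k(Y_t;t)\mid Y_s]=p_k(Y_s;s)$ for $s<t$, which is the same statement as your ``inserted resolutions of the identity collapse into the transition laws \eqref{Y-transitions}.'' Your factorization $1+t+2\sqrt t\,y=|1+\sqrt t\,e^{i\theta}|^2$ and the observation that $A\sqrt t\cdot C/\sqrt t=AC$ keeps \eqref{eq:restriction} valid at every time are the right structural remarks.

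The gap is that the two load-bearing computations are asserted rather than performed, and they are the entire content of the theorem. First, the similarity taking $\mathsf E+t\mathsf D$ to the time-$t$ Askey--Wilson Jacobi matrix must be exhibited explicitly (it is not literally conjugation by $t^{\mathsf N/2}$; the correct diagonal entries involve ratios of the $L^2$ norms of the polynomials at the two parameter sets), and one must check that the boundary vectors $\langle W|$ and $|V\rangle$ are simultaneously mapped, for every $t_j$, to the functionals ``integrate against $\nu(\cdot;A\sqrt{t_j},B\sqrt{t_j},C/\sqrt{t_j},D/\sqrt{t_j},q)$'' --- this is where the specific boundary relations $\langle W|(\alpha\mathsf E-\gamma\mathsf D)=\langle W|$ and $(\beta\mathsf D-\delta\mathsf E)|V\rangle=|V\rangle$ enter, and your sketch never uses them. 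Second, the one-step identity --- that the bilinear kernel produced by inserting the polynomial basis between the $t_j$ and $t_{j+1}$ factors equals the transition law \eqref{Y-transitions} --- is a Poisson-kernel evaluation for Askey--Wilson polynomials; you may legitimately import it from \citep{bryc10askey}, where it is exactly what makes the marginals and transitions consistent, but it must be invoked with matching normalizations rather than waved at. By contrast, the two obstacles you flag (atoms when $A>1$ or $C>1$, and unboundedness) are the milder part: each factor is tridiagonal, so the product applied to $|V\rangle$ has controlled support growth, and $AC<1$ supplies the decay needed for the pairing with $\langle W|$. With the two computations above written out, your induction-on-$n$ variant would constitute a complete proof.
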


Now, to establish our limit theorems, it suffices to analyze the asymptotics of the two expectations
that appear in the numerator and in the denominator
 on the right-hand side of~\eqref{eq:BW}.
For this purpose, we shall see that asymptotically, only the law of
$\{Y_t\}_{t\in[1-\varepsilon,1]}$ matters for arbitrarily small $\varepsilon>0$.
 We first proceed
 in Section~\ref{sec:T2} with
 the proof of the low/high density phases, in which case the law of $Y_t$ near upper boundary
of its support
 is easy to analyze.

\begin{remark}\label{rem:AW}
The connection between Askey--Wilson polynomials and the ASEP has been known for a long time, see for example \citep{sasamoto99one,uchiyama05correlation,uchiyama04asymmetric}.
In \citep{uchiyama04asymmetric,uchiyama05correlation}, using Askey--Wilson polynomials and complex integrals, the asymptotics of most commonly investigated statistics are computed,
including current, density, partition function and the multiple-point correlation function, for results in both fan and shock regions (except the case $A = C >1$
where the steady state does not have constant density).
The identification %
of the Askey--Wilson Markov process in Theorem \ref{T-BW17}
turned out to be convenient for our proofs,
at the expense of  restriction of parameters of ASEP to the fan region $AC<1$. Notice that in general, Askey--Wilson polynomials do not necessarily admit a
positive
orthogonality measure,
and conditions on the coefficients for %
its
 existence are subtle (see \citep{bryc10askey}).
\end{remark}

\begin{remark}\label{rem:comparison}
The version of the matrix ansatz method that we use is more analytic so our method differs from the usual applications of the matrix ansatz that seem to have more combinatorial flavor. For example,   a formula for the joint distribution of the increments of $h_n$  is given in \citep[Eq.~(3.7)]{enaud04large} and used to derive the large deviation principle via a combinatorial argument \citep[Eq.~(3.16), (3.17)]{enaud04large}, essentially by expressing the probability of interest as a sum of probabilities indexed by different paths and then counting the number of paths that asymptotically have the same order of probabilities. This argument is of a completely different nature of ours.

The combinatorial nature of the matrix ansatz method has also been exploited in applications to problems on combinatorial enumeration \citep{corteel2011matrix}.
\end{remark}
\begin{remark}\label{rem:T0}
At the boundary of the fan region,  $AC=1$, one can read from \citep[Eq.~(2.14) and (2.15)]{bryc10askey} that
 \[
 Y_t = \frac{[A+B-AB(C+D)]t+C+D-CD(A+B)}{2\sqrt t(1-ABCD)} = \frac12\pp{A\sqrt t+\frac1{A\sqrt t}},
 \]
 is deterministic.
 Now,
 from \eqref{eq:BW} we can read out that  $\{\tau_j\}_{j=1,\dots,n}$ are independent, and
 $\aa{t_j^{\tau_j}}_n = t_jA/(1+A) + 1/(1+A)$.
So these are Bernoulli random variables with
  $$\aa{\tau_j}_n=\frac A{1+A}.$$
 Theorem~\ref{T0} now is a consequence of the well known Donsker's theorem
 \cite{billingsley99convergence}.

\end{remark}
\section{
Proofs
 for
low/high density phases}\label{sec:T2} In this section, we investigate the case $A>1, AC<1$ and $C>1, AC<1$. In the
representation~\eqref{eq:BW}, the law of the associated Askey--Wilson process with parameters $(A,B,C,D;q)$ may have atoms. It
turns out that we will only need the point mass on the largest atom. We shall only use this representation for the high density
phase ($A>1, AC<1$). For the low density phase, the result shall follow by  the particle-hole duality.

Fix $A>1$ and $C<1/A$. %
Recall that $B,D\in(-1,0]$ and atoms are only generated by parameters that have absolute value larger than 1, so possibly by
$A\sqrt t, C/\sqrt t$ and $D/\sqrt t$. When $t\in(\max\{1/A^2,D^2\},1]$,  all the atoms are generated by $A\sqrt t$
by~\eqref{eq:atoms} with $a = A\sqrt t$, and in this case we let $y_j(t)$ denote the $(j+1)$-th largest atom of the law of
$Y_t$. In particular, we have
 \[
y_0(t) = \frac12\pp{A\sqrt t+\frac1{A\sqrt t}}>1
 \mbox{ for } t\in\left(\max\ccbb{\frac1{A^2},D^2},1\right].
\]
We shall need the mass of $Y_1$ on $y_0(1)$, which is denoted by, recalling~\eqref{eq:p0},
\[
\mathfrak p_0 = p(y_0(1);A,B,C,D,q) = \frac{\qp{1/A^2,BC,BD,CD}}{\qp{B/A,C/A,D/A,ABCD}}.
\]
In the sequel, we write $a_n\sim b_n$ if $\limn a_n/b_n = 1$.
\subsection{Proof
of Theorem \ref{T2}
for the high density phase $A>1, AC<1$}
We
prove
the convergence of corresponding Laplace transform. We first recall
the Laplace transform of the finite-dimensional distribution of
the
Brownian motion. %
For $x_0=0<x_1<\dots<x_d\leq  x_{d+1}=1$,
 $c_1,\dots,c_d>0$, $s_k = c_k+\cdots+c_d$, $k=1,\dots,d$, and $s_{d+1} = 0$, we have
\begin{multline}\label{eq:Brownian}
\esp\exp\pp{-\summ k1d {c_k}\B_{x_k}}   = \esp\exp\pp{-\summ k1d (s_k-s_{k+1})\B_{x_k}}\\
=
 \esp\exp\pp{-\summ k1d {s_k}(\B_{x_k}-\B_{x_{k-1}})} = \exp\pp{\frac12\summ k1{d+1}s_k^2(x_k-x_{k-1})} .
\end{multline}

For the ASEP in the high density phase, consider the centered \heightfunction\
  function
  \eqref{eq:h_HD}
and its   Laplace transform %
with argument
$\vv c= (c_1,\dots,c_d)\in\R_+^d$
defined by
\[
\varphi^{\rm H}_{\vv x,n}(\vv c)  = \aa{ \exp\pp{-\summ k1d c_kh^{\rm H}_n(x_k)}}_n.
\]
Note that in Theorem~\ref{T2}, the limit Brownian motion is scaled by
$%
{\sqrt A}/(1+A)$.
Therefore,
by Theorem \ref{Thm-Laplace}
to prove the high density phase of Theorem~\ref{T2}
it suffices to prove
\[
\limn\varphi^{\rm H}_{\vv x,n}\pp{\frac{\vv c}{\sqrt n}} = \exp\pp{\summ k1{d+1}\frac{A}{2(1+A)^2}s_k^2(x_k-x_{k-1})}.%
\]
To do so, we first write
\begin{align*}
\varphi^{\rm H}_{\vv x,n}(\vv c) & =  \aa{\exp\pp{-\summ k1d \sum_{j=\floor{n x_{k-1}}+1}^{\floor{nx_k}}\pp{\tau_j-\frac A{1+A}}(c_k+\cdots+c_d)}}_n\\
& = \exp\pp{\summ k1d \frac {A}{1+A}s_k(n_k-n_{k-1})}
\aa{{\prodd k1{d+1}\prod_{j=n_{k-1}+1}^{n_k}(e^{-s_k})^{\tau_j}}}_n,
\end{align*}
where
 $n_k = \floor{nx_k}, k=1,\dots,d+1$.
By \eqref{eq:BW},
\begin{equation}
  \label{eq:Lap-EZ}
  \varphi^{\rm H}_{\vv x,n}(\vv c) = \frac1{2^nZ_n}\esp\bb{\prodd k1{d+1}\pp{\frac{1+e^{-s_k}+2e^{-s_k/2}Y_{e^{-s_k}}}{e^{-s_kA/(1+A)}}}^{n_k-n_{k-1}}}
\end{equation}
with $Z_n = \esp(1+Y_1)^n$.

\begin{lemma}
  If    $A>1$ and $AC<1$, then
    \begin{equation*}%
      Z_n\sim \frac{(1+A)^{2n}}{2^nA^n}\mathfrak p_0.
    \end{equation*}

\end{lemma}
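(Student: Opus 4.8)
The plan is to use the fact that at $t=1$ the marginal law of the Askey--Wilson process is the Askey--Wilson measure itself, so that
\[
Z_n = \esp(1+Y_1)^n = \int_{\RR} (1+y)^n\,\nu(dy;A,B,C,D,q),
\]
and to isolate the exponentially dominant contribution. Since $A>1$ while $AC<1$ forces $C<1/A<1$, and $B,D\in(-1,0]$, the only parameter of modulus exceeding $1$ at $t=1$ is $A$; hence by~\eqref{eq:atoms} every atom of $\nu$ has the form $y_j=\tfrac12\pp{Aq^j+1/(Aq^j)}$ with $|Aq^j|\ge 1$. There are only finitely many such $j$, all the $y_j$ lie in $(1,y_0]$, and the largest atom is $y_0=y_0(1)=\tfrac12\pp{A+1/A}$, carrying mass $\mathfrak p_0$. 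The key numerical identity is
\[
1+y_0 = 1+\frac12\pp{A+\frac1A} = \frac{(1+A)^2}{2A} > 2,
\]
the strict inequality being exactly $y_0>1$, which holds because $A>1$.

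First I would split the integral into three pieces: the largest atom $y_0$, the remaining finitely many atoms, and the absolutely continuous part supported on $[-1,1]$. The contribution of $y_0$ is precisely $(1+y_0)^n\mathfrak p_0 = \frac{(1+A)^{2n}}{2^nA^n}\mathfrak p_0$, which is the claimed leading term, so it remains to show that the other two pieces are $o$ of this quantity.

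For the remaining atoms, since $x\mapsto \tfrac12(x+1/x)$ is strictly increasing on $[1,\infty)$ and $Aq^j<A$ for $j\ge1$, we have $1\le y_j<y_0$, hence $1+y_j<1+y_0$; as the measure is a probability measure each mass is at most $1$, so the total contribution of these atoms is $O(\rho^n)$ for some $\rho<1+y_0$, and thus negligible against $(1+y_0)^n$. For the absolutely continuous part, $y\in[-1,1]$ gives $1+y\le 2$, so that piece is bounded by $2^n$ times the total continuous mass, hence by $2^n$; since $1+y_0>2$ by the identity above, this is also $o\pp{(1+y_0)^n}$. Combining the three estimates yields the asserted equivalence.

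The only delicate point is the strict inequality $1+y_0>2$: this is precisely where the hypothesis $A>1$ (rather than $A\ge 1$) enters, and it is what guarantees that the single dominant atom outweighs the entire continuous part of the spectrum, which concentrates its top value at $1+y=2$. Everything else is a routine comparison of exponential growth rates.
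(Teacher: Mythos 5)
Your proposal is correct and follows essentially the same route as the paper: isolate the top atom $y_0(1)$, whose contribution is exactly $\mathfrak p_0(1+A)^{2n}/(2^nA^n)$, and show the rest of the measure lives on $[-1,y_1^*]$ with $y_1^*<y_0(1)$, hence contributes $o\pp{(1+y_0(1))^n}$. The only cosmetic difference is that the paper bounds the remaining atoms and the absolutely continuous part in a single step via $y_1^*(1)=\max(y_1(1),1)$, whereas you treat them as two separate pieces.
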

This result has been known in the literature. See Remark~\ref{rem:Z_n}. We provide a proof here for
completeness.
\begin{proof}
Let $y_1^*(1) = \max(y_1(1),1)$ denote the upper bound of the support of the law of $Y_1$ on $\R\setminus\{y_0(1)\}$. Note that
$y_1^*(1)<y_0(1)$
(recall that $y_0(1)$ and $y_1(1)$ are the
locations of the
point masses of the first and second atom of $Y_1$,
see \eqref{eq:atoms}, and that the first atom lies above $1$ and above the second atom). %
 It then follows that
$$
Z_n= \int_{\{y_0(1)\}}(1+y)^n\nu(dy;A,B,C,D,q) + \int_{[-1,y_1^*(1)]}(1+y)^n\nu(dy;A,B,C,D,q).
$$
The first term equals
\[
\mathfrak p_0(1+y_0(1))^n = \mathfrak  p_0\frac{(1+A)^{2n}}{2^nA^n},
\] and the second term is bounded from above by $(1+y_1^*(1))^n$, which
converges to 0 when divided by  $(1+y_0(1))^n$.
\end{proof}
In view of the asymptotics of $Z_n$,
we introduce
\[
\psi(s,y) = \frac{1+e^{-s}+2e^{-s/2}y}{e^{-sA/(1+A)}}\frac A{(1+A)^2},
\]
and have
\[
\varphi^{\rm H}_{\vv x,n}\pp{\frac{\vv c}{\sqrt n}} \sim \frac{M_n}{\mathfrak p_0} %
\qmwith M_n = \esp\bb{\prodd k1{d+1}\psi\pp{\frac{s_k}{\sqrt n},Y_{e^{-s_k/\sqrt n}}}^{n_k-n_{k-1}}}.
\]
The desired result now follows from the following.
\begin{lemma}With the notation above,
\[
\limn M_n = \mathfrak p_0 \exp\pp{\summ k1d\frac{A}{2(1+A)^2}s_k^2 %
(x_k-x_{k-1})}.
\]
\end{lemma}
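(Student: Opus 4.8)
The plan is to show that the expectation $M_n$ concentrates on the event that the Askey--Wilson process sits on its top atom $y_0(t_k)$ at each sampling time $t_k=e^{-s_k/\sqrt n}$, that on this event the integrand is deterministic and converges to the Gaussian factor in the statement, and that the probability of this event converges to $\mathfrak p_0$.

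First I would evaluate $\psi$ on the top atom. Since $2\sqrt{t_k}\,y_0(t_k)=At_k+1/A$, substituting $y_0(t_k)=\tfrac12\pp{A\sqrt{t_k}+1/(A\sqrt{t_k})}$ into $\psi$ gives the clean identity
\[
\psi\pp{\frac{s_k}{\sqrt n},y_0(t_k)}=\frac{1+At_k}{(1+A)\,t_k^{A/(1+A)}}.
\]
Writing $g(s)=\log(1+Ae^{-s})-\log(1+A)+sA/(1+A)$, one checks $g(0)=g'(0)=0$ and $g''(0)=A/(1+A)^2$, so that $g(s)=\tfrac{A}{2(1+A)^2}s^2+O(s^3)$. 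Hence $(n_k-n_{k-1})g(s_k/\sqrt n)\to\frac{A}{2(1+A)^2}s_k^2(x_k-x_{k-1})$, using $n_k-n_{k-1}\sim n(x_k-x_{k-1})$ and $(n_k-n_{k-1})O(n^{-3/2})\to 0$; summing over $k$ (the $k=d+1$ term vanishes because $s_{d+1}=0$) shows that $\prodd k1{d+1}\psi(s_k/\sqrt n,y_0(t_k))^{n_k-n_{k-1}}$ converges to the exponential in the statement.

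Next I would localize to the top atom. Because $\psi(s,\cdot)$ is increasing and $\psi(0,y)=2(1+y)A/(1+A)^2$ equals $1$ at $y=y_0(1)$ but is strictly below $1$ for every $y$ under the top atom, I would decompose $\esp$ over the finitely many events indexed by the set of times at which $Y$ occupies its top atom: each term for which some sampling time lies off the top atom is dominated by a bounded quantity times $\theta^{\,n_k-n_{k-1}}$ with $\theta<1$, hence tends to $0$, while on the all-top event the integrand is the deterministic product analysed above. It then remains to prove $\proba\pp{Y_{t_k}=y_0(t_k)\ \mfa k}\to\mathfrak p_0$. By the Markov property this probability factors into the top-atom mass of the marginal law of $Y_{t_1}$, which tends to $\mathfrak p_0$ by continuity of~\eqref{eq:p0} as $t_1\to1$, times the one-step conditional masses $\proba(Y_{t_{k+1}}=y_0(t_{k+1})\mid Y_{t_k}=y_0(t_k))$. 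Inserting $y_0(t_k)\pm\sqrt{y_0(t_k)^2-1}=(A\sqrt{t_k})^{\pm1}$ into~\eqref{Y-transitions} turns each conditional law into an Askey--Wilson law with parameters $a=A\sqrt{t_{k+1}}$, $b=B\sqrt{t_{k+1}}$, $c=At_k/\sqrt{t_{k+1}}$, $d=1/(A\sqrt{t_{k+1}})$, whose top atom is $y_0(t_{k+1})$.

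The step I expect to be the main obstacle is evaluating these conditional masses, because two factors of~\eqref{eq:p0} individually degenerate as the times coalesce: $c/a\to1$ and $cd\to1$ force $\qp{c/a}$ and $\qp{cd}$ to vanish. The resolution is the exact identity $c/a=cd=t_k/t_{k+1}$, which cancels the two singular factors in the ratio~\eqref{eq:p0}; the surviving quotient converges to $\qp{A^{-2},AB,B/A}/\qp{B/A,A^{-2},AB}=1$ as $t_k,t_{k+1}\to1$, so every conditional mass tends to $1$ and the joint probability tends to $\mathfrak p_0$. Combining the three ingredients yields $\limn M_n=\mathfrak p_0\exp\pp{\summ k1d\frac{A}{2(1+A)^2}s_k^2(x_k-x_{k-1})}$, as claimed.
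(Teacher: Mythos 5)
Your proposal is correct and follows essentially the same route as the paper: Taylor-expanding $\psi$ evaluated on the top atom, localizing $M_n$ to the event that $Y$ sits on the top atom via the strict gap $y_1^*(1)<y_0(1)$ (which makes the off-atom contribution decay geometrically), and computing the probability of that event through \eqref{Y-transitions} and \eqref{eq:p0} with the same cancellation $c/a=cd=s/t$. The only cosmetic difference is that the paper observes the conditional masses equal $1$ exactly (so the all-atom event coincides with $\{Y_{t_{1,n}}=y_0(t_{1,n})\}$ and has probability $\mathfrak p_{0,n}\to\mathfrak p_0$), whereas you only claim they tend to $1$, which still suffices.
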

\begin{proof}We have that
\equh\label{eq:psi}
\psi(s,y_0(e^{-s})) = \pp{\frac 1{1+A}e^{sA/(1+A)}+\frac A{1+A}e^{-s/(1+A)}} = 1+\frac{As^2}{2(1+A)^2}+o(s^2)
\eque
as $s\downarrow 0$. Introduce $s_{k,n} = s_k/\sqrt n$ and $t_{k,n} = e^{-s_k/\sqrt n}$.
Note that due to our choice of $s_k$, we have $t_{1,n}<t_{2,n}<\dots<t_{d,n}<1=t_{d+1,n}$.

We write $M_n = M_{n,1}+M_{n,2}$ with
\[
 M_{n,1} = \esp\bb{\prodd k1{d+1}\psi\pp{s_{k,n},Y_{t_{k,n}}}^{n_k-n_{k-1}}%
 \inddd{Y_{t_{1,n}} = y_0(t_{1,n})}}.
\]
We shall show that $M_n\sim M_{n,1}$ as $n\to\infty$. Indeed, we have that $Y_{t_{k,n}}\le y_0(t_{k,n})$ and hence $\psi(s_{k,n},Y_{t_{k,n}})\le \psi(s_{k,n},y_0(t_{k,n}))$ almost surely.
First, observe that
\[
\proba(Y_{t_{1,n}} = y_0(t_{1,n})) = \proba(Y_{t_{k,n}} = y_0(t_{k,n}), k=1,\dots, d+1).
\]
That is, once the process $Y_s$ reaches the highest point $y_0(s)$ at some time $s$, necessarily $s>1/A^2$, $Y_s$ stays on the deterministic trajectory $(y_0(t))_{t\ge s}$. This follows by computing $\proba(Y_t=y_0(t)\mid Y_s = y_0(s))$ for $1/A^2<s<t$. In this case, one has $y_0(s)>1$,
\[
y_0(s) +\sqrt{y_0(s)^2-1} = A\sqrt s, \qmand y_0(s) -\sqrt{y_0(s)^2-1} = \frac1{A\sqrt s}.
\]
So by~\eqref{Y-transitions} and~\eqref{eq:p0},
\begin{align*}
\proba(Y_t = y_0(t)\mid Y_s = y_0(s)) & = \nu\pp{\{y_0(t)\};A\sqrt t,B\sqrt t, \sqrt{s/t} A\sqrt s,\sqrt{s/t}/(A\sqrt s)}\\
& = p\pp{y_0(t);A\sqrt t,B\sqrt t,As/\sqrt t,1/(A\sqrt t),q}\\
& = \frac{\qp{1/(A^2t),ABs,B/A,s/t}}{\qp{B/A,s/t,1/(A^2t),ABs}} = 1.
\end{align*}
Introduce also %
$\mathfrak p_{0,n} = \proba(Y_{t_{1,n}} = y_0(t_{1,n}))$.
Recalling~\eqref{eq:p0}, we have
\begin{align*}
\mathfrak p_{0,n} &= p\pp{y_0(t_{1,n}); A\sqrt{t_{1,n}},B\sqrt{t_{1,n}},C/\sqrt{t_{1,n}},D/\sqrt{t_{1,n}},q}\\
& = \frac{\qp{1/(A^2t_{1,n}),BC,BD,CD/t_{1,n}}}{\qp{B/A,C/(At_{1,n}),D/(At_{1,n}),ABCD}}\\
& \to \frac{\qp{1/A^2,BC,BD,CD}}{\qp{B/A,C/A,D/A,ABCD}}= \mathfrak p_0
\end{align*}
as $n\to\infty$.
Therefore, by~\eqref{eq:psi},
\[
 M_{n,1}  = \mathfrak p_{0,n} {\prodd k1{d+1}\psi\pp{s_{k,n},y_0(t_{k,n})}^{n_k-n_{k-1}}}   \to  \mathfrak p_0\exp\pp{\summ k1d\frac{A}{2(1+A)^2}s_k^2 %
(x_k-x_{k-1})}
\]
as $n\to\infty$.
On the other hand, introducing $y_1^*(s) = \max\{y_1(s),1\}$, on the event $\{Y_s \ne y_0(s)\}$, we have $Y_s\le y_1^*(s)$ almost surely. Then,
\begin{align*}
M_{n,2} \le  (1-\mathfrak p_{0,n})\psi(s_{1,n},y_1^*(t_{1,n}))^{n_1}{\prodd k2{d+1}\psi\pp{s_{k,n},y_0(t_{k,n})}^{n_k-n_{k-1}}}.
\end{align*}
Now, one sees immediately that, since
by continuity
$\limn y_1^*(t_{1,n})/y_0(t_{1,n})=y_1^*(1)/y_0(1)\in(0,1)$,
\begin{align*}
\frac{M_{n,2}}{M_n} & \le  \frac{1-\mathfrak p_{0,n}}{\mathfrak p_{0,n}}\pp{\frac{\psi(s_{1,n},y_1^*(t_{1,n}))}{\psi(s_{1,n},y_0(t_{1,n}))}}^{n_1} \\
&
= \frac{1-\mathfrak p_{0,n}}{\mathfrak p_{0,n}}\pp{\frac{1+e^{-2s_{1,n}}+2e^{-s_{1,n}}y_1^*(t_{1,n})}{1+e^{-2s_{1,n}}+2e^{-s_{1,n}}y_0(t_{1,n})}}^{n_1} \to 0
\end{align*}
as $n\to\infty$.
Therefore $M_n\sim M_{n,1}$, and the desired result follows.
\end{proof}
\subsection{Proof
of Theorem \ref{T2}
for the low density phase $C>1, AC<1$
}\label{sec:particle_hole}
The result for the low density phase  is an immediate consequence of the result for the high density phase, by the particle-hole duality which we now explain.
We have seen the definition of an ASEP with parameters
$(\alpha,\beta,\gamma,\delta,q)$. Instead of thinking of particles jumping around, we view the particles as background and allow the holes to jump around (viewing
Figure~\ref{Fig1} as white particles jumping around among black unoccupied sites).
In this way, equivalently a hole jumps to the unoccupied left and right sites with rates 1 and $q$, respectively, and disappears at site $1$ with rate $\alpha$ and
at
site $n$ with rate $\delta$, and enters site $n$ if unoccupied with rate $\beta$ and site $1$ if unoccupied with rate
$\gamma$. This is the ASEP with parameters
$(\beta,\alpha,\delta,\gamma,q)$,
 if we relabel the sites $\{1,\dots,n\}$ by $\{n,\dots,1\}$.

Fix $q\in[0,1)$. Let
$\pi_n^{A,B,C,D}$
 denote the stationary distribution of the ASEP with parameters $(\alpha,\beta,\gamma,\delta,q)$.
Let $\tau_1,\dots,\tau_n$ be as before, and set $\varepsilon_j = 1-\tau_{n-j+1}$. Introduce
\[
\what h_n^{\rm L}(x) = \summ j1{\floor {nx}}\pp{\varepsilon_j-\frac C{1+C}}.
\]
The above argument shows that $\{\what h_n^{\rm L}(x)\}_{x\in[0,1]}$ with respect to
$\pi_n^{A,B,C,D}$ has the same law as $\{h^{\rm H}_n(x)\}_{x\in[0,1]}$ (defined in~\eqref{eq:h_HD}) with respect to
$\pi_n^{C,D,A,B}$.
Therefore, the high density phase of Theorem~\ref{T2} tells that
\[
\frac1{\sqrt n}\ccbb{{\what h_n^{\rm L}(x)}}_{x\in[0,1]}\fddto \frac{\sqrt C}{1+C}\ccbb{\B_x}_{x\in[0,1]}.
\]
We are interested in $h_n^{\rm L}(x) = \summ j1{\floor{nx}}(\tau_j-%
1/(1+C))$
with respect to $\pi_n^{A,B,C,D}$.
 Observe that
\[
\pp{\eps_j-\frac{C}{1+C}}+\pp{\tau_{n-j+1}-\frac{1}{1+C}}=0,
\] so
\equh\label{eq:particle_hole}
h_n^{\rm L}(x) + \pp{\what h_n^{\rm L}(1) - \what h_n^{\rm L}(1-x)} = \left\{
\begin{array}{ll}
0 & nx =\floor{nx}\\ \\
\displaystyle \frac C{1+C}- \varepsilon_{\floor{n(1-x)}+1} & nx\ne \floor{nx}
\end{array}
\right..
\eque
Since the error term is uniformly bounded,
the finite-dimensional distributions of %
$n^{-1/2}\sccbb{h^{\rm L}_n(x)}_{x\in[0,1]}$ have the same limit
as the finite-dimensional distributions of %
$n^{-1/2}\sccbb{\what h_n^{\rm L}(1-x) - \what h_n^{\rm L}(1)}_{x\in[0,1]}$, and
 we arrive at
\[
\frac1{\sqrt n}\ccbb{h^{\rm L}_n(x)}_{x\in[0,1]} \fddto \frac{\sqrt C}{1+C}\ccbb{\B_{1-x} - \B_{1}}_{x\in[0,1]} \eqd \frac{\sqrt C}{1+C}\ccbb{\B_x}_{x\in[0,1]}.
\]
This proves the low density phase of Theorem~\ref{T2}.
\section{
Proofs
for Maximal current phase and its boundary}\label{sec:T1}
The proofs for  the two cases are very similar and are hence unified.
We need some preparation for the proof.
In Section~\ref{sec:Z} we review an important auxiliary Markov process $\Z$, and in particular how this Markov process shows up in the Laplace representations of Brownian excursion and meander. Another important role of this Markov process is that it is the tangent process of the Askey--Wilson process at the boundary. This result, playing a central role in the proof, will be established first  in Proposition~\ref{prop:tangent} in Section~\ref{sec:AW_continuous}. The
case $A\le 1, C<1$ is then
proved
 in Theorem~\ref{T1''} in Section~\ref{sec:A<=1C<1}. The case $A<1, C=1$ is proved by the particle-hole duality in Section~\ref{sec:A<1C=1}.
\subsection{An auxiliary Markov process}\label{sec:Z}
An auxiliary Markov process, denoted by $\Z$ in the rest of the paper, will play a crucial role in the proof for the maximal current phase and its boundary. This is a positive self-similar Markov process with values in $[0,\infty)$ and transition probability density function
\equh\label{eq:Z_pdf}
\qqq_{s,t}(x,y) = \frac{2(t-s)\sqrt y}{\pi\bb{(t-s)^4+2(t-s)^2(x+y) + (x-y)^2}} \indd{x\ge 0, y\ge 0}, \quad s<t.
\eque
This process is self-similar in the sense that, letting $\proba_x$ denote the law of $\Z$ starting at $\Z_0 = x$,
\[
\pp{\ccbb{\Z_{\lambda t}}_{t\ge 0},\proba_x} \eqd \pp{\lambda^2\ccbb{\Z_t}_{t\ge 0},\proba_{x/\lambda^2}} \mfa \lambda,x>0.
\]
This process has not been much investigated in the literature,
except for %
a series of recent
papers \citep{bryc16local,wang18extremes,bryc18dual}.
In \citep{bryc16local}, when investigating the path properties of so-called $q$-Gaussian processes, we proved that the process $\Z$ arises as their tangent process at the boundary.
(We also proved in Section 3 therein that the transformed process $\wt \Z$ via $\wt \Z_t=\Z_{t/2}+t^2/4$ has already shown up in the literature: in a general framework connecting non-commutative stochastic process and classical Markov process developed by \citet{biane98processes}. In this framework, $\wt \Z$ as a classical Markov process corresponds to the free $1/2$-stable process, the knowledge of which we do not need here.)

Recently, we also found out in  \citep{bryc18dual} that the process $\Z$
plays an intriguing role in representations of the Laplace transforms %
 of finite-dimensional distributions of Brownian excursion and Brownian meander.
Write $\esp_u(\cdot) = \esp(\cdot\mid \Z_0 = u)$.
For all $d\in\N$,  $s_1>s_2>\cdots>s_d>s_{d+1} = 0$, and $0 = x_0 < x_1<\cdots<
x_d \leq
  x_{d+1} = 1$, we have shown in \citep{bryc18dual} that
\begin{multline}\label{eq:ex}
 \esp\exp\pp{-\summ k1d(s_k-s_{k+1})\B_{x_k}^{ex}}
\\
= \frac1{\sqrt{2\pi}}\int_{\R_+}u^{1/2} du\esp _u \exp\pp{-\frac12\summ k1{d+1}\Z_{s_k}(x_k-x_{k-1})},
\end{multline}
and
\begin{multline}\label{eq:me}
 \esp\exp\pp{-\summ k1d(s_k-s_{k+1})\B_{x_k}^{me}}
\\
= \frac1{\sqrt{2\pi}}\int_{\R_+}\frac1{u^{1/2}} du\esp _u \exp\pp{-\frac12\summ k1{d+1}\Z_{s_k}(x_k-x_{k-1})}.
\end{multline}
Note that the formulae here are obtained via the changes of variables
\[
s_k = \what s_{d+1-k} \qmand  x_k = 1-t_{d+1-k}, \quad k=1,\dots,d+1,
\]
and $x_0 = 0$, where $\what s_k$ and $t_k$ correspond to variables $s_k,t_k$ used in \citep{bryc18dual}.

The above identities
can be
 obtained by direct computation using the
joint density functions
  of Brownian excursion and meander.
These  explicit densities  of the two processes will not be used
 in this paper.
Standard references %
about Brownian excursions and meanders
 include \citep{revuz99continuous,pitman99brownian, pitman06combinatorial}.

\subsection{Tangent process of Askey--Wilson process with $A\le1$ and $C<1$}\label{sec:AW_continuous}
The proof is essentially based on the laws of the Askey--Wilson process %
 $\{Y_t\}_{t\in[1-\varepsilon,1]}$ for some $\varepsilon>0$ small enough. With $A\le 1$ and $C<1$,
for this range of $t$
the marginal and transition probability laws are absolutely continuous with respect to the Lebesgue measure, with compact support on $[-1,1]$.
For the purpose of computing asymptotics, we express the formula by regrouping factors into those that  tend to a
non-zero
constant as $t\uparrow 1$ and $x\uparrow 1$, and those that tend to
zero. (Some factors go to
zero
only when $A=1$, and we include them in the second group.) In particular,
the Askey--Wilson process $Y$ has the marginal probability density function
\begin{align}
  \label{f-density}
  \pi_t(x) & = f\pp{x;A\sqrt t,B\sqrt t,C/\sqrt{t},D/\sqrt t,q} \\
&  = \frac{\qp q\qp{ABt,AC,AD,BC,BD,CD/t}}{2\pi\qp{ABCD}|\qp{B\sqrt te^{i\theta_x},C e^{i\theta_x}/\sqrt{t},De^{i\theta_x}/\sqrt t}|^2}\nonumber\\
& \quad\times \frac{\sabs{\qp{e^{2i\theta_{x}}}}^2}{\sqrt{1-x^2}|\qp{A\sqrt te^{i\theta_x}}|^2},%
\nonumber
\end{align}
with $x=\cos\theta_x$, for $x\in[-1,1]$,
 and
transition probability density function
\begin{align}\label{eq:pst}
p_{s,t}(x,y) & = f\pp{y;A\sqrt t,B\sqrt t,\sqrt{s/t}e^{i\theta_x},\sqrt{s/t}e^{-i\theta_x}} \\
& = \frac{\qp q\qp{ABt}\sabs{\qp {B\sqrt se^{i\theta_{x}}}}^2}{2\pi\qp{ABs}\sabs{\qp {B\sqrt te^{i\theta_{y}}}}^2}\nonumber\\
& \quad\times
\frac{\sabs{\qp {A\sqrt se^{i\theta_x},e^{2i\theta_{y}}}}^2\qp {s/t}}{\sqrt{1-y^2}\sabs{\qp{A\sqrt te^{i\theta_y},\sqrt{s/t}e^{i(\theta_{x}+\theta_{y})},\sqrt{s/t}e^{i(-\theta_x+\theta_y)}}}^2},\nonumber
\end{align}
where $y=\cos\theta_y$, for $x,y\in[-1,1]$. (Recall %
\eqref{eq:f} and
$q$-Pochhammer notation in \eqref{eq:Pochhamer}.)

It turns out to be helpful  to
consider
$\{Y_t\}_{t\in[1-\varepsilon,1]}$ in the reversed time direction
when $Y_1$ is close to 1.
 For this purpose, we introduce
 Markov process
\equh\label{eq:Yhat} \what Y_{s}\topp n=2n\pp{1-Y_{e^{-2s/\sqrt n}}}, s\ge 0. \eque
(The parameterization for $\what Y_s\topp n$ is chosen  so that we have the corresponding convergence to the tangent process \eqref{eq:conv_tangent} below.) %
The following limit theorem of $\what Y\topp n$ is at the core of the proof of Theorems~\ref{T1} and~\ref{T1'}. Let $\what \pi\topp n_{s}$ denote the probability density function of $\what Y\topp n_{s}$ %
and $\what p_{s,t}\topp n$ the transition probability density of $\what Y\topp n$ from time $s$ to $t$.

\begin{proposition}\label{prop:tangent}Under the notation above, for $A\le 1$ and $C<1$,
$0\leq s<t$ and $u,v\geq 0$ we have
$$\limn\what p_{s,t}\topp n(u,v)= \qqq_{s,t}(u,v).$$
In particular, we have
\equh\label{eq:conv_tangent}
\mathcal{L}\pp{\ccbb{\what Y_{s}\topp n}_{s\ge 0}\mmid \what Y_0\topp n = u}
\fddto \mathcal{L}\pp{\ccbb{\Z_{s}}_{s\ge 0}\mmid \Z_0 = u}, \mfa u>0,
\eque
where the left-hand side is understood as the law of $\what Y$ given $\what Y_0\topp n = u$, and similarly for the right-hand
side above. Moreover, %
as $n\to\infty$
\begin{equation}
  \label{eq:what-pi}
  \what \pi\topp n_0(u)
\sim \left\{
\begin{array}{ll}
\displaystyle \frac{\mathfrak c_1}{n^{3/2}}\cdot u^{1/2} & A<1,C<1\\
\\
\displaystyle \frac{\mathfrak c_2}{n^{1/2}}\cdot\frac 1{u^{1/2}} & A=1, C<1,
\end{array}
\right.
 \mfa u>0,
\end{equation}
\comment{
for all $u>0$, we have
\begin{equation}
  \label{Lim_hat_pi1}
  \limn n^{3/2}\what \pi\topp n_0(u) = \mathfrak c_1\cdot u^{1/2} \mbox{ if $A<1, C<1$},
\end{equation}
and
\begin{equation} \label{Lim_hat_pi2}
\limn n^{1/2}\what \pi\topp n_0(u) = \mathfrak c_2\cdot u^{-1/2} \mbox{ if $A=1, C<1$},
\end{equation}
}
with
\begin{align*}
\mathfrak c_1& = \frac{\qp q^3}\pi \frac{\qp{AB,AC,AD,BC,BD,CD}}{\qp{ABCD}\qp{A,B,C,D}^2}\\
\mathfrak c_2 & = \frac{\qp q}\pi\frac{\qp{BC,BD,CD}}{\qp{BCD,B,C,D}} %
,
\end{align*}
and  there exists %
a
constant $c$ such that for all
$n$ large enough,
\equh\label{eq:pihat_bd}
\what \pi\topp n_0(u) \le \begin{cases}
\displaystyle \frac c{n^{3/2}}\cdot u^{1/2} & A<1, C<1\\ \\
\displaystyle \frac c{n^{1/2}}\cdot \frac 1{u^{1/2}} & A=1, C<1,
\end{cases}
\mfa u>0.
\eque
\end{proposition}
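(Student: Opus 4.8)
The plan is to reduce both the transition-density convergence and the marginal asymptotics to the explicit formulas \eqref{f-density} and \eqref{eq:pst}, expanded in the regime $t\uparrow1$, $x\uparrow1$. Since $\what Y\topp n$ runs in the time direction opposite to $Y$, I would first express its transition density through the time reversal of $Y$. Writing $\rho=e^{-2s/\sqrt n}$ and $\sigma=e^{-2t/\sqrt n}$, so that $\sigma<\rho\le1$, and setting $y=\cos\theta_y=1-u/(2n)$, $z=\cos\theta_z=1-v/(2n)$, the events $\what Y\topp n_s=u$ and $\what Y\topp n_t=v$ read $Y_\rho=y$ and $Y_\sigma=z$; the reversed Markov property then gives
\[
\what p\topp n_{s,t}(u,v)=\frac1{2n}\cdot\frac{\pi_\sigma(z)\,p_{\sigma,\rho}(z,y)}{\pi_\rho(y)},
\]
the factor $1/(2n)$ being the Jacobian of $y\mapsto u$. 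Everything is then a matter of inserting \eqref{f-density} and \eqref{eq:pst} and letting $n\to\infty$.

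Next I would record the elementary scalings $\theta_y\sim\sqrt u/\sqrt n$, $\theta_z\sim\sqrt v/\sqrt n$ (from $2\sin^2(\theta_y/2)=1-y=u/(2n)$), together with $1-\sqrt\rho\sim s/\sqrt n$, $1-\sqrt\sigma\sim t/\sqrt n$, $1-\sigma/\rho\sim2(t-s)/\sqrt n$ and $\sqrt{\sigma/\rho}\to1$, and then sort the $q$-Pochhammer factors into those converging to nonzero constants (all of them, since $|A|,|B|,|C|,|D|<1$, the single exception being the factor carrying $A$ when $A=1$) and those that vanish. The vanishing ones are $\sabs{\qp{e^{2i\theta_y}}}^2\sim4\theta_y^2\qp q^2$, $\sqrt{1-y^2}=\sin\theta_y\sim\theta_y$, $\qp{\sigma/\rho}\sim\tfrac{2(t-s)}{\sqrt n}\qp q$, and, crucially, the pair appearing in \eqref{eq:pst}, whose $j=0$ terms multiply to
\[
\sabs{\qp{\sqrt{\sigma/\rho}\,e^{i(\theta_y+\theta_z)},\sqrt{\sigma/\rho}\,e^{i(\theta_y-\theta_z)}}}^2\sim\frac1{n^2}\bb{(t-s)^4+2(t-s)^2(u+v)+(u-v)^2}\qp q^4,
\]
which is exactly the quartic in the denominator of $\qqq_{s,t}$. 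When $A=1$ there are two further vanishing factors, $\sabs{\qp{A\sqrt\sigma e^{i\theta_z}}}^2\sim\tfrac{t^2+v}{n}\qp q^2$ and $\sabs{\qp{A\sqrt\rho e^{i\theta_y}}}^2\sim\tfrac{s^2+u}{n}\qp q^2$.

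Assembling these, for $A<1$ the $A$-factors tend to $\qp A^2$ and cancel between numerator and denominator; substituting the scalings into the displayed quotient collapses all powers of $n$ and all copies of $\qp q$, leaving precisely $\qqq_{s,t}(u,v)$. For $A=1$ the two extra vanishing factors produce $(t^2+v)$ and $(s^2+u)$, but the very same factors reappear in the marginal ratio $\pi_\sigma(z)/\pi_\rho(y)$ and cancel, so that the limit is again $\qqq_{s,t}(u,v)$. Specializing to $s=0$, $\rho=1$ gives $\what\pi\topp n_0(u)=\tfrac1{2n}\pi_1(1-u/(2n))$, and reading off the surviving regular prefactor yields \eqref{eq:what-pi} with the stated constants: $\mathfrak c_1$ in the case $A<1$, where the $A$-factor is regular, and $\mathfrak c_2$ in the case $A=1$, where it is the singular $\sabs{\qp{e^{i\theta_y}}}^2\sim(u/n)\qp q^2$ that converts the $u^{1/2}$ into $u^{-1/2}$.

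For the uniform bound \eqref{eq:pihat_bd} I would replace the asymptotic equivalences by two-sided inequalities valid for every $\theta_y\in(0,\pi)$, i.e.\ for all $u\in(0,4n)$: regular denominators are bounded below through $|1-\alpha e^{i\theta}q^j|\ge1-|\alpha|q^j$; the singular numerator obeys $\sabs{\qp{e^{2i\theta_y}}}^2\le4\sin^2\theta_y\,\qp{-q}^2$; one has $\sqrt{1-y^2}=\sin\theta_y$, and when $A=1$, $\sabs{\qp{e^{i\theta_y}}}^2\ge4\sin^2(\theta_y/2)\qp q^2=(u/n)\qp q^2$; combined with $\sin\theta_y\le\sqrt u/\sqrt n$ these give $\what\pi\topp n_0(u)\le c\,n^{-3/2}u^{1/2}$ when $A<1$ and $\le c\,n^{-1/2}u^{-1/2}$ when $A=1$. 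Finally, \eqref{eq:conv_tangent} follows from the pointwise transition-density convergence by Scheffé's lemma applied successively along $s_1<\cdots<s_d$, using that $\qqq_{s,t}$ integrates to one in its second variable. The main obstacle is the $q$-Pochhammer bookkeeping: recognizing that the two cross-terms combine into the tangent-process quartic, and that in the boundary case $A=1$ the singular $A$-factors cancel between the transition kernel and the marginal ratio rather than contributing to the limit.
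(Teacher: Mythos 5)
Your proposal is correct and follows essentially the same route as the paper: the same time-reversal identity $\what p^{(n)}_{s,t}(u,v)=\tfrac{1}{2n}\pi_\sigma(z)p_{\sigma,\rho}(z,y)/\pi_\rho(y)$, the same expansion of the $q$-Pochhammer factors (with the two cross-terms combining into the quartic of $\qqq_{s,t}$ and the singular $A$-factors cancelling against the marginal ratio when $A=1$), the same elementary lower/upper bounds for the uniform estimate \eqref{eq:pihat_bd}, and the same Scheff\'e argument for \eqref{eq:conv_tangent}. No gaps.
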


\begin{remark}
Recall that notation $a_n\sim b_n$ means that   \eqref{eq:what-pi} reads as
\[
\limn n^{3/2}\what \pi\topp n_0(u) = \mathfrak c_1\cdot u^{1/2}
\]
if $A<1,C<1$, and
\[\limn n^{1/2}\what \pi\topp n_0(u) = \mathfrak c_2\cdot u^{-1/2}\]
if $A=1, C<1$.
\end{remark}

The
first part of the proposition
implies
that the %
(time-reversed) tangent process of $Y$ at the upper boundary
of the support of $Y_1$
 is
 process
 $\Z$. The role of the second part will become clear soon.
It is remarkable that  for different choices of $A$ and $C$,  the tangent processes are the same, but the initial laws ($\what \pi_0\topp n$) in the limit are different and of different normalization %
orders.
Similar results on tangent processes have been known for closely related processes \citep{bryc16local,wang18extremes}.
 We expect that the finite-dimensional convergence can be strengthened to weak convergence in $D([0,1])$ by a similar treatment as in \citep{wang18extremes},
but this is not needed in this paper. %
We first compute some   asymptotics
of
the   Askey--Wilson process $Y$.
\begin{lemma}%
For $u,v,s,t>0, s<t$ and
\[
x_n = 1-\frac {u}{2n}, y_n = 1-\frac {v}{2n}, s_n = e^{-2s/\sqrt n}, t_n = e^{-2t/\sqrt n},
\]%
\equh\label{eq:marginal_convergence}
\pi_{s_n}(x_n) \sim \left\{
\begin{array}{ll}
\displaystyle \frac{2\mathfrak c_1}{\sqrt n}\cdot \sqrt{u} & A<1,C<1\\
\\
\displaystyle 2\mathfrak c_2\sqrt n\cdot \frac {\sqrt u}{s^2+u} & A=1, C<1,
\end{array}
\right.
\eque
and there exists some constant $c$ such that
\equh\label{eq:pi_bd}
\pi_{s_n}(x_n) \le \begin{cases}
\displaystyle c\sqrt{\frac un} & A<1, C<1\\
\\
\displaystyle c\sqrt{\frac nu} & A=1,C<1,
\end{cases}
\eque
for all $n$ large enough.
Moreover, %
\equh\label{eq:Markov_convergence}
 p_{t_n,s_n}(y_n,x_n) \sim
\left\{
\begin{array}{ll}
\displaystyle 2n\cdot \qqq_{s,t}(v,u) & A<1, C<1\\ \\
\displaystyle 2n\cdot \qqq_{s,t}(v,u) \frac{t^2+v}{s^2+u}& A=1, C<1.
\end{array}
\right.
\eque
\end{lemma}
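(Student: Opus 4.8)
The plan is to derive all three displays directly from the explicit formulas \eqref{f-density} and \eqref{eq:pst} by substituting $x_n,y_n,s_n,t_n$ and extracting the leading-order behavior of each $q$-Pochhammer factor as $n\to\infty$. The underlying mechanism is simple: every factor splits as $\qp{w_n}=(1-w_n)\prod_{j\ge1}(1-w_nq^j)$, the tail $\prod_{j\ge1}(1-w_nq^j)$ always converging to a nonzero constant (to $\qp q$ when $w_n\to1$, and to a finite nonzero limit otherwise), so that all asymptotic content sits in the leading terms $(1-w_n)$, which I expand in powers of $1/\sqrt n$. The task is then bookkeeping: pair up the factors correctly and decide which vanish (and at what rate) and which survive.

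First I record the elementary asymptotics. Since $\cos\theta_{x_n}=x_n=1-u/(2n)$ we have $\theta_{x_n}\sim\sqrt{u/n}$ and, exactly, $\sqrt{1-x_n^2}=\sin\theta_{x_n}=\sqrt{(u/n)(1-u/(4n))}$; similarly $\theta_{y_n}\sim\sqrt{v/n}$. Writing $\sqrt{s_n}=e^{-s/\sqrt n}\approx1-s/\sqrt n$ and likewise for $t_n$, the relevant leading terms are, to first order in $1/\sqrt n$: $1-e^{2i\theta_{x_n}}\approx-2i\sqrt{u/n}$, so $\sabs{\qp{e^{2i\theta_{x_n}}}}^2\sim4(u/n)\qp q^2$; $1-t_n/s_n\approx2(t-s)/\sqrt n$, so $\qp{t_n/s_n}\sim2(t-s)\qp q/\sqrt n$; and, crucially for the case $A=1$, $1-\sqrt{s_n}e^{i\theta_{x_n}}\approx(s-i\sqrt u)/\sqrt n$, so $\sabs{\qp{\sqrt{s_n}e^{i\theta_{x_n}}}}^2\sim(s^2+u)\qp q^2/n$, and analogously $\sabs{\qp{\sqrt{t_n}e^{i\theta_{y_n}}}}^2\sim(t^2+v)\qp q^2/n$. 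All factors built from $B,C,D$ (and from $A$ when $A<1$) converge to nonzero constants because $|B|,C,|D|<1$ and, for $n$ large, $C/\sqrt{s_n}<1$. Collecting these limits in \eqref{f-density} and matching against the definitions of $\mathfrak c_1,\mathfrak c_2$ yields \eqref{eq:marginal_convergence}; the dichotomy $A<1$ versus $A=1$ is exactly the dichotomy $\sabs{\qp{A\sqrt{s_n}e^{i\theta_{x_n}}}}^2\to\qp A^2$ versus $\sim(s^2+u)\qp q^2/n$, which converts the $n^{-1/2}$ rate into the $n^{1/2}$ rate and inserts the factor $1/(s^2+u)$.

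The heart of the computation is the transition density \eqref{eq:pst} at $p_{t_n,s_n}(y_n,x_n)$, where the two ``cross'' factors $\sqrt{t_n/s_n}\,e^{i(\theta_{y_n}\pm\theta_{x_n})}$ produce the denominator of $\qqq_{s,t}$. Their leading terms are $\tfrac1{\sqrt n}\bb{(t-s)-i(\sqrt u+\sqrt v)}$ and $\tfrac1{\sqrt n}\bb{(t-s)-i(\sqrt u-\sqrt v)}$, so the product of their squared moduli is $\tfrac1{n^2}\bb{(t-s)^2+(\sqrt u+\sqrt v)^2}\bb{(t-s)^2+(\sqrt u-\sqrt v)^2}$, and the identity
\[
\bb{(t-s)^2+(\sqrt u+\sqrt v)^2}\bb{(t-s)^2+(\sqrt u-\sqrt v)^2}=(t-s)^4+2(t-s)^2(u+v)+(u-v)^2
\]
reproduces exactly the bracket in \eqref{eq:Z_pdf}. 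Combining this with $\sabs{\qp{e^{2i\theta_{x_n}}}}^2\sim4(u/n)\qp q^2$, $\qp{t_n/s_n}\sim2(t-s)\qp q/\sqrt n$, $\sqrt{1-x_n^2}\sim\sqrt{u/n}$, the cancellation of the $B$-factors, and the prefactor ratio $\qp{ABs_n}/\qp{ABt_n}\to1$, the powers of $n$ combine to $n$ and the powers of $\qp q$ cancel, leaving $2n\,\qqq_{s,t}(v,u)$ when $A<1$. When $A=1$ the factors $\sabs{\qp{A\sqrt{t_n}e^{i\theta_{y_n}}}}^2$ and $\sabs{\qp{A\sqrt{s_n}e^{i\theta_{x_n}}}}^2$ no longer cancel but contribute the ratio $(t^2+v)/(s^2+u)$, giving the second line of \eqref{eq:Markov_convergence}.

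Finally, the uniform bounds \eqref{eq:pi_bd} follow from the same factorization promoted to inequalities valid for all $u>0$: the vanishing numerator factor is bounded above by $\sqrt{1-x_n^2}=\sqrt{(u/n)(1-u/(4n))}\le\sqrt{u/n}$, while for $A=1$ the vanishing denominator factor is bounded below by $\sabs{1-\sqrt{s_n}e^{i\theta_{x_n}}}^2=(1-\sqrt{s_n})^2+\sqrt{s_n}\,u/n\ge\tfrac12\,u/n$ for $n$ large; every tail product is controlled uniformly in $\theta\in[0,\pi]$ and $n$ via $\prod_{j\ge1}(1-q^j)\le\prod_{j\ge1}\sabs{1-wq^j}\le\prod_{j\ge1}(1+q^j)$ for $\sabs w\le1$, and the $B,C,D$ factors stay bounded away from zero. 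I expect the main obstacle to be bookkeeping rather than conceptual—keeping the many $q$-Pochhammer factors correctly paired and tracking which vanish at rate $1/\sqrt n$ versus which survive—together with making the uniform estimates hold all the way out to the edge $u\approx4n$ where $x_n$ reaches $-1$ (beyond which the density vanishes and the bound is trivial).
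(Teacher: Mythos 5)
Your proposal is correct and follows essentially the same route as the paper: substitute into \eqref{f-density} and \eqref{eq:pst}, factor each $q$-Pochhammer symbol into its leading term (carrying the $n^{-1/2}$ or $n^{-1}$ rate) times a tail converging to a nonzero constant, use the same product identity to produce the denominator of $\qqq_{s,t}$, and obtain the uniform bounds from $\sabs{\qp{e^{2i\theta_{x_n}}}}^2=4(1-x_n^2)\sabs{\qp{qe^{2i\theta_{x_n}}}}^2$ together with the lower bound $\sabs{1-\sqrt{s_n}e^{i\theta_{x_n}}}^2\ge 2\sqrt{s_n}(1-x_n)$ in the case $A=1$. The only (immaterial) difference is that you linearize the complex exponentials directly where the paper expands the exact expression $1+t_n/s_n-2\sqrt{t_n/s_n}\,(x_ny_n\mp\sqrt{1-x_n^2}\sqrt{1-y_n^2})$.
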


\begin{proof}
We first establish
\equh\label{eq:1}
\abs{\qpp {\sqrt{{t_n}/{s_n}}e^{\pm i\theta_{x_n}}e^{i\theta_{y_n}}}}^2  \sim \frac1n\bb{(t-s)^2+(\sqrt u\pm \sqrt v)^2}\qp q^2.
\eque
For this, write
\begin{multline}\label{eq:2}
\abs{\qpp {\sqrt{{t_n}/{s_n}}e^{\pm i\theta_{x_n}}e^{i\theta_{y_n}}}}^2   \\=
\abs{1-\sqrt{t_n/s_n}e^{i(\pm \theta_{x_n}+\theta_{y_n})}}^2 \abs{\qpp{\sqrt{t_n/s_n}e^{i(\pm\theta_{x_n}+\theta_{y_n})}q}}^2.
\end{multline}
Since $t_n\to 1, s_n\to 1, \theta_{x_n}\to 0, \theta_{y_n}\to 0$,
the second factor above is asymptotically equivalent to $\qp q^2$. For the first factor,
\begin{align}
\abs{1-\sqrt{t_n/s_n}e^{i(\pm \theta_{x_n}+\theta_{y_n})}}^2  & = 1+t_n/s_n - 2\sqrt{t_n/s_n}\pp{x_ny_n \mp  \sqrt{1-x_n^2}\sqrt{1-y_n^2}}\label{eq:3}\\
& \sim \frac1n\bb{(t-s)^2+(\sqrt u\pm \sqrt v)^2}.%
\nonumber
\end{align}
This proves~\eqref{eq:1}. As special cases, we have %
\begin{align}
\label{eq:s^2+u}\abs{\qpp{\sqrt{s_n}e^{i\theta_{x_n}}}}^2& \sim \frac1n\pp{{s^2}+u}\qp q^2,\\
\nonumber\qpp{{t_n}/{s_n}}& \sim \frac{2(t-s)}{\sqrt n}\qp q,\\
\nonumber\abs{\qpp{e^{i\theta_{x_n}}}}^2 & \sim \frac{u}n\qp q^2,\\
\nonumber\abs{\qpp{e^{2i\theta_{x_n}}}}^2 & \sim \frac {4u}n\qp q^2,
\end{align}
as $n\to\infty$.

We now examine $\pi_{s_n}(x_n)$ using~\eqref{f-density}.
The pointwise asymptotics~\eqref{eq:marginal_convergence} are straightforward to obtain. We prove the upper bounds.
Recall that $\pi_{s_n}$ has support on $[-1,1]$. Therefore from now on we assume $x_n \in[-1,1]$, or equivalently $u\in[0,4n]$.
We first focus on the second fraction of $\pi_{s_n}(x_n)$ in the expression~\eqref{f-density},
\[
\wt \pi_{s_n}(x_n) = \frac{\abs{\qp {e^{2i\theta_{x_n}}}}^2}{\sqrt{1-x_n^2}\abs{\qp{A\sqrt {s_n}e^{i\theta_{x_n}}}}^2}.
\]%
Let $c$ denote a constant independent of $n$, but may change from line to line.
Suppose $A<1$ first.  Observe that
$\abs{\qp{A\sqrt {s_n}e^{i\theta_{x_n}}}}^2\ge\qp A^2$.
Furthermore, we have $|\qp{e^{2i\theta_{x_n}}}|^2 = 4(1-x_n^2)|\qp{e^{2i\theta_{x_n}}q}|^2$ (see e.g.~\eqref{eq:2} and~\eqref{eq:3}). Therefore,
\[
\wt \pi_{s_n}(x_n)\le c\frac{\abs{\qp{e^{2i\theta_{x_n}}}}^2}{\sqrt{1-x_n^2}} \le c\sqrt{1-x_n^2} \le c\sqrt{\frac un}.
\]
Now suppose $A=1$.
Then
\[
\wt \pi_{s_n}(x_n)\le c\frac{\abs{\qp{e^{2i\theta_{x_n}}}}^2}{\sqrt{1-x_n^2}\abs{\qp{\sqrt {s_n}e^{i\theta_{x_n}}}}^2}\le c\frac{\sqrt{1-x_n^2}}{\abs{1-\sqrt {s_n}e^{i\theta_{x_n}}}^2}.
\]
By~\eqref{eq:3}, $\abs{1-\sqrt{s_n}e^{i\theta_{x_n}}}^2 = 1+s_n-2\sqrt{s_n}x_n =
(1-\sqrt{s_n})^2
+2\sqrt{s_n}(1-x_n)
>2 \sqrt{s_1}(1-x_n)
$. So,
we see that $\wt \pi_{s_n}(x_n)$
is bounded from above by,
\[
c\frac{\sqrt{1-x_n^2}}{1-x_n} \le c\sqrt{\frac nu}.
\]
So we see that  $\wt \pi_{s_n}(x_n)$ can be controlled by
the
bounds in ~\eqref{eq:pi_bd}. For the first fraction of $\pi_{s_n}(x_n)$ in~\eqref{f-density}, it suffices to control
\[
\frac{\qp{CD/s_n}}{\qp{Ce^{i\theta_{x_n}}/\sqrt{s_n},De^{i\theta_{x_n}}/\sqrt{s_n}}}.
\]
For the numerator,
since $CD\in(-1,0]$,  we have $\qp{CD/s_n} \le \qp{CD/s_1}$.
For the denominator,
 for $n$ large enough so that $C/\sqrt {s_n} <(1+C)/2<1$, we have
  $\abs{\qp{Ce^{i\theta_{x_n}}/\sqrt{s_n}}}^2\ge \qp{(1+C)/2}^2$.
 Similarly, $|\qp{De^{i\theta_{x_n}}/\sqrt{s_n}}|^2>\qp{(1-D)/2}$   for $n$ large enough so that $-D/\sqrt {s_n} <(1-D)/2<1$.
 So the first fraction can be bounded by some
 constant $c$. This completes the proof of~\eqref{eq:pi_bd}.

 Now to show~\eqref{eq:Markov_convergence},
observe first that
by  \eqref{eq:1}  applied to each factor, %
\begin{align*}
& \abs{\qpp{\sqrt{t_n/s_n}e^{i(\theta_{x_n}+\theta_{y_n})}}\qpp{\sqrt{t_n/s_n}e^{i(-\theta_{x_n}+\theta_{y_n})}}}^2\\
& \sim \frac1{n^2}\qp q^4 \bb{(t-s)^2+(\sqrt u+ \sqrt v)^2}\bb{(t-s)^2+(\sqrt u- \sqrt v)^2}\\
& = \frac1{n^2}\qp q^4\bb{(t-s)^4+2(t-s)^2(u+v) + (u-v)^2}.
\end{align*}
So~\eqref{eq:pst} now yields
\begin{align*}
p_{t_n,s_n}(y_n,x_n) & \sim \frac{\qp q}{2\pi}\frac{\abs{\qp{A\sqrt{t_n}e^{i\theta_{y_n}}}}^2}{\abs{\qp{A\sqrt{s_n}e^{i\theta_{x_n}}}}^2}\\
& \quad \times \frac{\abs{\qp{e^{2i\theta_{x_n}}}}^2\qp{t_n/s_n}}{\sqrt{1-x_n^2}\abs{\qp{\sqrt{t_n/s_n}e^{i(\theta_{y_n}+\theta_{x_n})}}\qp{\sqrt{t_n/s_n}e^{i(-\theta_{y_n}+\theta_{x_n})}}}^2}\\
& \sim \frac{\qp q}{2\pi}\frac{\abs{\qp{A\sqrt{t_n}e^{i\theta_{y_n}}}}^2}{\abs{\qp{A\sqrt{s_n}e^{i\theta_{x_n}}}}^2}\\
& \quad \times \frac{\displaystyle\frac{4u}n\qp q^2\frac{2(t-s)}{\sqrt n} \qp q}{\displaystyle\sqrt{\frac un}  \frac1{n^2}\qp q^4\bb{(t-s)^4+2(t-s)^2(u+v) + (u-v)^2} }\\
& = \frac{\abs{\qp{A\sqrt{t_n}e^{i\theta_{y_n}}}}^2}{\abs{\qp{A\sqrt{s_n}e^{i\theta_{x_n}}}}^2} \cdot 2n \cdot \qqq_{s,t}(v,u).
\end{align*}
By~\eqref{eq:s^2+u}, we have
\[
\limn\frac{\abs{\qp{A\sqrt{t_n}e^{i\theta_{y_n}}}}^2}{\abs{\qp{A\sqrt{s_n}e^{i\theta_{x_n}}}}^2}
= \begin{cases}
1 & A<1\\
\\
\displaystyle \frac{t^2+v}{s^2+u}& A=1.
\end{cases}
\]
This completes the proof.\end{proof}

\begin{proof}[Proof of Proposition~\ref{prop:tangent}]
Recall the relation between $\what Y$ and $Y$ in~\eqref{eq:Yhat}. We have that
$\what \pi_s\topp n(u) = \pi_{s_n}(x_n)/(2n)$,
and so the second part of the proposition follows from~\eqref{eq:marginal_convergence} and~\eqref{eq:pi_bd}.
 Fix $0<s<t$ now, so that $0<t_n<s_n<1$. For the transition probability density of $\what Y$, we have

\[
\what p_{s,t}\topp n(u,v) = p_{s_n,t_n}(x_n,y_n)\frac1{2n} = p_{t_n,s_n}(y_n,x_n)\frac{\pi_{t_n}(y_n)}{\pi_{s_n}(x_n)}\frac1{2n},
\]
where $p_{s_n,t_n}$ denotes the transition probability of $Y$ in the reversed time direction. In the case $A<1,C<1$,
from~\eqref{eq:marginal_convergence} and \eqref{eq:Markov_convergence} we get
\[
\what p_{s,t}\topp n(u,v) \sim \qqq_{s,t} (v,u)\sqrt{\frac vu} =
\qqq_{s,t}(u,v)
\]
directly.
In the case $A=1,C<1$,
from~\eqref{eq:marginal_convergence} and  \eqref{eq:Markov_convergence} we get
\[
\what p_{s,t}\topp n(u,v) \sim
\qqq_{s,t}(v,u)
\frac{t^2+v}{s^2+u}\cdot\frac{\sqrt v/(t^2+v)}{\sqrt u/(s^2+u)} =
\qqq_{s,t}(u,v).
\]
Since the transition densities determine conditional finite-dimensional densities, the finite-dimensional (conditional) densities converge. Therefore, by Scheff\'e's %
Theorem
the finite-dimensional (conditional) distributions converge weakly for every $u>0$, which
completes the proof for the first part of the proposition.
\end{proof}

\subsection{Proof
of Theorems \ref{T1} and \ref{T1'}
for the case $A\le 1, C<1$}\label{sec:A<=1C<1}
For $d\in\N$, $c_1,\dots,c_d>0$,
$x_0=0<x_1<\cdots<x_d\le x_{d+1}=1$, introduce the Laplace transform
\begin{align*}
\varphi_{\vv x,n}(\vv c) & = \aa{\exp\pp{-\summ k1d c_kh_n(x_k)}}_n
\\
& = \aa{\exp\pp{-\summ k1d\sum_{j=\floor{nx_{k-1}}+1}^{\floor{nx_k}}\pp{\tau_j-\frac12}(c_k+\cdots+c_d)}}_n.
\end{align*}
This time it will be more convenient to write
 \equh\label{eq:s_k}
 s_k%
 =
  \frac12(c_k+\cdots+c_d),  k=1,\dots,d
 \eque
(notice the extra $1/2$ compared to $s_k$ in previous sections),
$s_{d+1} = 0$, and $n_k =\sfloor{n{x_k}}, k=1,\dots, d+1$.
We have, by Theorem \ref{T-BW17} again,
\begin{align*}
\varphi_{\vv x,n}(\vv c) %
& =  \exp\pp{\summ k1d s_k(n_k-n_{k-1})}\aa{{\prodd k1{d+1}\prod_{j=n_{k-1}+1}^{n_k}(e^{-2s_k})^{\tau_j}}}_n\\
& = \exp\pp{\summ k1d s_k(n_k-n_{k-1})}\frac{\esp\bb{\prodd k1{d+1}(1+e^{-2s_k}+2e^{-s_k}Y_{e^{-2s_k}})^{n_k-n_{k-1}}}}{2^n\esp(1+Y_1)^n}\\
& = \frac{\esp\bb{\prodd k1{d+1} \pp{\cosh(s_k)+Y_{e^{-2s_k}}}^{n_k-n_{k-1}}}}{Z_n},
\end{align*}
where we write
$Z_n = \esp(1+Y_1)^n$.
In order to establish the convergence of finite-dimensional
distributions
of $h_n$, we compute the limit of
\equh\label{eq:Laplace}
\varphi_{\vv x,n}\pp{\frac{\vv c}{\sqrt n}} = \frac{\esp\bb{\prodd k1{d+1} \pp{\cosh(s_k/\sqrt n)+Y_{e^{-2s_k/\sqrt n}}}^{n_k-n_{k-1}}}}{Z_n},
\eque
as $n\to\infty$, and identify the limit with the Laplace transform of the corresponding process.
Now
in view of Theorem \ref{Thm-Laplace},
Theorems~\ref{T1} and~\ref{T1'} are a consequence of the following.
\begin{theorem}\label{T1''}
For all $d\in\N, 0<x_1<\cdots<x_d\le 1$ and $\vv c = (c_1,\dots,c_d)\in(0,\infty)^d$,
\begin{multline}\label{eq:Laplace_convergence}
\limn\varphi_{\vv x,n}\pp{\frac{\vv c}{\sqrt n}}
\\
=\left\{
\begin{array}{ll}
\displaystyle \esp\exp\pp{-\summ k1d \frac{c_k}{2\sqrt 2}\B_{x_k}}\esp \exp\pp{-\summ k1d\frac{c_k}{2\sqrt 2}\B^{ex}_{x_k}} & A<1, C<1,\\ \\
\displaystyle  \esp\exp\pp{-\summ k1d \frac{c_k}{2\sqrt 2}\B_{x_k}}\esp \exp\pp{-\summ k1d\frac{c_k}{2\sqrt 2}\B^{me}_{x_k}} & A=1, C<1.
 \end{array}
 \right.
\end{multline}

\end{theorem}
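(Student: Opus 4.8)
\emph{The plan} is to analyze the ratio in~\eqref{eq:Laplace} by passing to the reversed, rescaled process $\what Y\topp n$ from~\eqref{eq:Yhat} and invoking Proposition~\ref{prop:tangent}. First I would factor $2^n$ out of both the numerator and the denominator $Z_n$. Writing $Y_{e^{-2s_k/\sqrt n}} = 1 - \what Y\topp n_{s_k}/(2n)$ and $\cosh(s_k/\sqrt n) = 1 + s_k^2/(2n) + O(n^{-2})$, each base becomes
\[
\frac{\cosh(s_k/\sqrt n)+Y_{e^{-2s_k/\sqrt n}}}2 = 1 + \frac{s_k^2 - \what Y\topp n_{s_k}}{4n} + O(n^{-2}),
\]
so that, since $n_k - n_{k-1} \sim n(x_k - x_{k-1})$, the $k$-th base raised to the power $n_k - n_{k-1}$ converges to $\exp\pp{\tfrac14(s_k^2 - \what Y\topp n_{s_k})(x_k - x_{k-1})}$. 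Collecting the deterministic parts over $k=1,\dots,d$ (recall $s_{d+1}=0$) yields the factor $\exp\pp{\tfrac14\summ k1d s_k^2(x_k - x_{k-1})}$, which by~\eqref{eq:Brownian} together with~\eqref{eq:s_k} equals $\esp\exp\pp{-\summ k1d \tfrac{c_k}{2\sqrt2}\B_{x_k}}$, the desired Brownian-motion factor.

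It then remains to handle the random factor $\exp\pp{-\tfrac14\summ k1{d+1}\what Y\topp n_{s_k}(x_k - x_{k-1})}$ and to divide by $Z_n/2^n = \esp\pp{1-\what Y\topp n_0/(4n)}^n$. I would express both the $2^n$-normalized numerator $N_n$ and $Z_n/2^n$ as integrals against the initial density $\what \pi\topp n_0$, conditioning on $\what Y\topp n_0=u$ and using the Markov property of $\what Y\topp n$. By Proposition~\ref{prop:tangent}, the conditional finite-dimensional laws of $\what Y\topp n$ converge to those of the tangent process $\Z$, while $n^{3/2}\what\pi\topp n_0(u)\to\mathfrak c_1 u^{1/2}$ when $A<1,C<1$ and $n^{1/2}\what\pi\topp n_0(u)\to\mathfrak c_2 u^{-1/2}$ when $A=1,C<1$. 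The terminal base at $s_{d+1}=0$ contributes $\exp\pp{-\tfrac14\what Y\topp n_0(1-x_d)}$, providing exponential decay in the initial value $u$ that tames the polynomial growth of the limiting weight. Combining this with the uniform bounds~\eqref{eq:pihat_bd}, and with the elementary inequality $(1-u/(4n))^n\le e^{-u/4}$ for the denominator, I expect a dominated-convergence argument to give
\[
n^{\alpha} N_n \to \exp\pp{\tfrac14\summ k1d s_k^2(x_k-x_{k-1})}\,\mathfrak c\int_0^\infty u^{\pm1/2}\,du\,\esp_u\exp\pp{-\tfrac14\summ k1{d+1}\Z_{s_k}(x_k-x_{k-1})},
\]
with $(\alpha,\mathfrak c)=(3/2,\mathfrak c_1)$ and exponent $+1/2$ in the first case and $(\alpha,\mathfrak c)=(1/2,\mathfrak c_2)$ and exponent $-1/2$ in the second, together with $n^\alpha Z_n/2^n\to\mathfrak c\int_0^\infty e^{-u/4}u^{\pm1/2}\,du$.

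To identify the random integral I would exploit the self-similarity $(\{\Z_{\lambda t}\},\proba_x)\eqd(\lambda^2\{\Z_t\},\proba_{x/\lambda^2})$: taking $\lambda=\sqrt2$ turns the coefficient $\tfrac14$ into $\tfrac12$ while replacing each $s_k$ by $\tilde s_k=s_k/\sqrt2$, and the substitution $u\mapsto u/2$ then brings the integral into precisely the form of the duality identity~\eqref{eq:ex} (respectively~\eqref{eq:me}). Since $\tilde s_k-\tilde s_{k+1}=\tfrac{c_k}{2\sqrt2}$, this yields $\esp\exp\pp{-\summ k1d\tfrac{c_k}{2\sqrt2}\B^{ex}_{x_k}}$ in the case $A<1,C<1$ and the analogous meander expression in the case $A=1,C<1$, up to an explicit multiplicative constant. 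A short bookkeeping check shows that the constant produced by $\mathfrak c$, the $\lambda$- and $u$-rescalings, and the $\sqrt{2\pi}$ in~\eqref{eq:ex}/\eqref{eq:me} cancels exactly against the constant in $n^\alpha Z_n/2^n$, so that the ratio $\varphi_{\vv x,n}(\vv c/\sqrt n)=N_n/(Z_n/2^n)$ converges to the product of the two Laplace transforms claimed in~\eqref{eq:Laplace_convergence}.

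The main obstacle is the dominated-convergence step: the limiting initial measure is infinite and Proposition~\ref{prop:tangent} supplies only pointwise, finite-dimensional convergence, so the interchange of limit and integration must be justified with a dominating function uniform in $n$, assembled from the density bounds~\eqref{eq:pihat_bd} and the terminal exponential decay. The borderline case $x_d=1$, where the terminal factor is absent and the decay in $u$ must instead be extracted from the base at time $s_d$ through the transition density of $\what Y\topp n$, will require separate treatment.
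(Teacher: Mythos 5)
Your proposal follows essentially the same route as the paper: the same reduction to $\what Y\topp n$ and its tangent process, the same dominated-convergence argument against the limiting infinite initial measure using the terminal exponential decay and the bounds \eqref{eq:pihat_bd}, the same self-similarity rescaling into the duality identities \eqref{eq:ex}/\eqref{eq:me}, and you even correctly flag the borderline case $x_d=1$ that the paper handles by conditioning at time $s_d$ instead. The plan is sound and matches the paper's proof of Lemma~\ref{lem:limit_Zn}, Proposition~\ref{prop:limit_Gn}, and the final assembly.
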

We first
determine
the asymptotics of $\esp(1+Y_1)^n$ in~\eqref{eq:Laplace}.

\begin{lemma}\label{lem:limit_Zn} We have
\[
Z_n = \esp(1+Y_1)^n
 \sim\left\{
\begin{array}{ll}
\displaystyle\frac{2^n}{n^{3/2}}\cdot 4\sqrt \pi\cdot \mathfrak c_1
& A< 1, C < 1\\
\\
\displaystyle
 \frac{2^n}{n^{1/2}}\cdot 2\sqrt \pi\cdot \mathfrak c_2
& A=1, C<1.%
\end{array}
\right.
\]
where $\mathfrak c_1, \mathfrak c_2$ are defined in Proposition~\ref{prop:tangent}.
\end{lemma}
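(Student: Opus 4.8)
The plan is to localize the integral $Z_n = \esp(1+Y_1)^n = \int_{-1}^1 (1+x)^n \pi_1(x)\,dx$ near the upper endpoint $x=1$, where $(1+x)^n$ attains its maximum $2^n$, and to recast the resulting Laplace-type asymptotics in terms of the rescaled variable already studied in Proposition~\ref{prop:tangent}. Since $A\le 1$ and $C<1$, all four parameters $A, B, C, D$ have absolute value at most $1$, so the law of $Y_1$ is absolutely continuous on $[-1,1]$ with density $\pi_1$. I would then perform the change of variables $u = \what Y_0\topp n = 2n(1-Y_1)$, under which $1+Y_1 = 2 - u/(2n)$ and hence $(1+Y_1)^n = 2^n\pp{1-u/(4n)}^n$. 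As $Y_1\in[-1,1]$, the variable $u$ ranges over $[0,4n]$, giving
\[
Z_n = 2^n\int_0^{4n}\pp{1-\frac u{4n}}^n \what\pi_0\topp n(u)\,du,
\]
where $\what\pi_0\topp n$ is the density of $\what Y_0\topp n$ appearing in Proposition~\ref{prop:tangent}.

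Next I would pass to the limit inside the integral after factoring out the normalizing order $n^{3/2}$ (in the case $A<1,C<1$) or $n^{1/2}$ (in the case $A=1,C<1$). Two pointwise limits drive the computation: first, $\pp{1-u/(4n)}^n \to e^{-u/4}$ for each fixed $u>0$; second, by the asymptotics~\eqref{eq:what-pi}, $n^{3/2}\what\pi_0\topp n(u) \to \mathfrak c_1 u^{1/2}$ when $A<1, C<1$, and $n^{1/2}\what\pi_0\topp n(u)\to \mathfrak c_2 u^{-1/2}$ when $A=1, C<1$. Thus the normalized integrand converges pointwise to $\mathfrak c_1\, u^{1/2}e^{-u/4}$, respectively $\mathfrak c_2\, u^{-1/2}e^{-u/4}$, and the limit of $n^{3/2}Z_n/2^n$ (resp. $n^{1/2}Z_n/2^n$) reduces to a single Gamma integral. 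An elementary substitution $u=4v$ gives $\int_0^\infty u^{1/2}e^{-u/4}\,du = 8\,\Gamma(3/2)=4\sqrt\pi$ and $\int_0^\infty u^{-1/2}e^{-u/4}\,du = 2\,\Gamma(1/2)=2\sqrt\pi$, producing exactly the two claimed constants $4\sqrt\pi\,\mathfrak c_1$ and $2\sqrt\pi\,\mathfrak c_2$.

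The main obstacle, and the only genuinely nontrivial point, is justifying this interchange of limit and integral, and here the uniform bound~\eqref{eq:pihat_bd} is precisely what is needed. For $n$ large it gives $n^{3/2}\what\pi_0\topp n(u)\le c\,u^{1/2}$ (resp. $n^{1/2}\what\pi_0\topp n(u)\le c\,u^{-1/2}$), while the elementary inequality $1-u/(4n)\le e^{-u/(4n)}$, valid and with nonnegative left side on the whole range $u\in[0,4n]$, yields $\pp{1-u/(4n)}^n\le e^{-u/4}$ there. Multiplying these furnishes the integrable dominating function $c\,u^{1/2}e^{-u/4}$ (resp. $c\,u^{-1/2}e^{-u/4}$), so dominated convergence applies; in the boundary case $A=1$ the singularity $u^{-1/2}$ at the origin is harmless, being locally integrable. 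I expect no difficulty beyond this, as the remaining steps are just the elementary inequality and the evaluation of the two Gamma integrals.
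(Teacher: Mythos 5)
Your proposal is correct and follows essentially the same route as the paper: the paper likewise substitutes $y=1-u/(2n)$ (equivalently works with $\what\pi_0^{(n)}$, since $\what\pi_0^{(n)}(u)=\pi_1(1-u/(2n))/(2n)$), passes to the pointwise limit $\mathfrak c_i\,u^{\pm 1/2}e^{-u/4}$ using \eqref{eq:marginal_convergence}, and justifies the interchange by dominated convergence with the bound \eqref{eq:pi_bd} combined with $(1-u/(4n))^n\le e^{-u/4}$, ending with the same Gamma integrals.
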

\begin{remark}\label{rem:Z_n}
The quantity $Z_n=\esp(1+Y_1)^n$ is closely related to  the {\em partition function} in the literature, denoted by $\mathcal Z_n$ %
for the discussion here, via (see explanation in \cite[Remark 5]{bryc17asymmetric}),
\[
\mathcal Z_n = Z_n\frac{2^n\<W|V\>}{(1-q)^{n}},
\]
where the factor $\<W|V\>$
  depends on the  choice of the vectors in \eqref{MatrixAnsatz} and   needs to be taken into account
when comparing Lemma \ref{lem:limit_Zn} with the literature.
Partial results on asymptotics of partition function, including
also low/high density phases, have been known. See for example \cite[(52),(53) and (55)]{derrida93exact}
for the case %
$q=0,\gamma=\delta = 0$ (with $\<W|V\> = 1$)  and
\cite[(56)]{blythe00exact} for
$A<1,C<1, B=D=0$ with $\<W|V\>= 1/\qp{AC}$.
In more generality,
\citet[(6.6) and (6.9)]{uchiyama04asymmetric}
compute $\mathcal Z_n$
for   $A>1, A>C$ and  $A,C<1$
(with $\<W|V\>={\qp{ABCD}}/{\qp{q, AB,AC,AD,BC,BD,CD}}$).
We do not find
 general results on asymptotic of $Z_n$
 for  $A=1,C<1$ in the literature.

\end{remark}

\begin{proof}
It follows from~\eqref{eq:marginal_convergence} that, taking $s=0$ therein,
as $n\to\infty$, for $u>0$,
\begin{equation}\label{aadvark1}
\pi_1\pp{1-\frac u{2n}}\sim\left\{
\begin{array}{ll}
\displaystyle 2\mathfrak c_1\sqrt{\frac{u}n} & A< 1, C< 1\\
\\
\displaystyle 2\mathfrak c_2\sqrt{\frac nu} & A=1, C< 1.%
 \end{array}
 \right.
\end{equation}
Consider first the case $A=1$. Then
\begin{multline}
\label{eq:Zn2}
2^{-n}n^{1/2}\esp(1+Y_1)^n =2^{-n}n^{1/2}\int_{-1}^1(1+y)^n\pi_1(y)dy
\\
= \int_{\R_+} \indd{u \leq 4n}\pp{1-\frac u{4n}}^n \pi_1\pp{1-\frac u{2n}}\frac{du}{2n^{1/2}}.
\end{multline}
In view of   \eqref{aadvark1}, the integrand  on the right-hand side of \eqref{eq:Zn2} converges to  $\mathfrak c_2e^{-u/4}/\sqrt{u}$.
Since  $\int_0^\infty e^{-u/4}/\sqrt{u}du=2\sqrt{\pi}$, to  conclude the proof  by the dominated convergence theorem, we now give %
an
integrable bound for the integrands.
Recalling~\eqref{eq:pi_bd}, and noting that $\pi_1\pp{1-%
u/(2n)}=0$ for $u>2n$,
we have, for $n$ large enough, %
\begin{equation*}
  \pp{1-\frac u{4n}}^n\pi_1\pp{1-\frac u{2n}}
   \le ce^{-u/4}\sqrt{\frac nu}, \mfa u>0
\end{equation*}
for some constant $c$ that depends on $C$ and $q<1$.

 The proof for $A<1,C<1$ is similar, starting from~\eqref{eq:Zn2} and eventually by the dominated convergence theorem. In this process, the above bound  is replaced by, for $n$ large enough,
\begin{equation*}
\pp{1-\frac u{4n}}^n\pi_1\pp{1-\frac u{2n}}
 \le ce^{-u/4}\sqrt \frac un, \mfa u>0.
\end{equation*}
Details are omitted. %
\end{proof}
Next, we look at the numerator in~\eqref{eq:Laplace}. We write,
recalling
$\what Y$ in \eqref{eq:Yhat}, %
\begin{equation}
  \label{eq:cosh2G}
  2^{-n}\esp  \bb{\prodd k1{d+1} \pp{\cosh\pp{\frac{s_k}{\sqrt n}}+Y_{e^{-2s_k/\sqrt n}}}^{n_k-n_{k-1}}}
 = \esp G_n\pp{\what Y_{s_1}\topp n,\dots,\what Y_{s_{d+1}}\topp n}
\end{equation}
with
\begin{align*}
G_n(\vv u)& = 2^{-n}\prodd k1{d+1} \pp{\cosh\pp{\frac{s_k}{\sqrt n}}+1-\frac{u_k}{2n}}^{n_k-n_{k-1}} \indd{\vv u\le4n}
\\
& = \prodd k1{d+1}\pp{1+\sinh^2\pp{\frac{s_k}{2\sqrt n}} - \frac {u_k}{4n}}^{n_k-n_{k-1}}\indd{\vv u \le 4n}.
\end{align*}
Recall that $\what Y$ takes values from $[0,4n]$.
We introduce the indicator function above for the convenience in later analysis
($\vv u\le 4n$ stands for $\max_{k=1,\dots,d+1}u_k\le 4n$ here).
Since $(n_k-n_{k-1})/n\to x_k-x_{k-1}$, we have %
\begin{equation}
  \label{eq:LimG_n}
  \limn G_n(\vv u) = \exp\pp{\frac14\summ k1{d+1}(s_k^2-u_k)(x_k-x_{k-1})}=:G(\vv u).
\end{equation}
The key step  is to show the following. Recall that $s_1>\cdots>s_d>s_{d+1} = 0$.
\begin{proposition}\label{prop:limit_Gn} %
With the notations above and %
$\esp_u(\cdot) = \esp(\cdot\mid \Z_{0} = u)$,
\begin{multline*}
\esp G_n\pp{\what Y_{s_1}\topp n,\dots,\what Y_{s_{d}}\topp n,\what Y_{0}\topp n} \\
\sim\left\{
\begin{array}{ll}
\displaystyle\frac{\mathfrak c_1}{n^{3/2}}\int_{\R_+^{}}\esp_u G(\Z_{s_1},\dots,\Z_{s_{d}},u)u^{1/2}du & A<1, C<1\\ \\
\displaystyle\frac{\mathfrak c_2}{n^{1/2}}\int_{\R_+^{}}\esp_u G(\Z_{s_1},\dots,\Z_{s_{d}},u)\frac1{u^{1/2}}du & A=1, C<1.
\end{array}
\right.
\end{multline*}
\end{proposition}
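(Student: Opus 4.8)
The plan is to use the Markov property of $\what Y\topp n$ to rewrite the expectation as an explicit integral against transition densities, pass to the limit inside the integral, and recognize the outcome through the transition density $\qqq_{s,t}$ of $\Z$. Ordering the times as $0 = s_{d+1}<s_d<\cdots<s_1$ and writing $u_k$ for the value at $\what Y_{s_k}\topp n$ ($k=1,\dots,d$) and $u_{d+1}$ for $\what Y_0\topp n$, the joint density of $(\what Y_{s_1}\topp n,\dots,\what Y_{s_d}\topp n,\what Y_0\topp n)$ factorizes as $\what\pi_0\topp n(u_{d+1})\prodd k1d\what p_{s_{k+1},s_k}\topp n(u_{k+1},u_k)$, so that
\[
\esp G_n\pp{\what Y_{s_1}\topp n,\dots,\what Y_{s_d}\topp n,\what Y_0\topp n}
= \int_{\R_+^{d+1}} G_n(\vv u)\,\what\pi_0\topp n(u_{d+1})\prodd k1d \what p_{s_{k+1},s_k}\topp n(u_{k+1},u_k)\,d\vv u .
\]

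First I would extract the normalization and identify the pointwise limit of the integrand. By Proposition~\ref{prop:tangent}, $\what p_{s_{k+1},s_k}\topp n(u_{k+1},u_k)\to\qqq_{s_{k+1},s_k}(u_{k+1},u_k)$ for each $k$, while $n^{3/2}\what\pi_0\topp n(u_{d+1})\to\mathfrak c_1 u_{d+1}^{1/2}$ when $A<1,C<1$ (respectively $n^{1/2}\what\pi_0\topp n(u_{d+1})\to\mathfrak c_2 u_{d+1}^{-1/2}$ when $A=1$), and by~\eqref{eq:LimG_n} $G_n(\vv u)\to G(\vv u)$. Hence, pointwise on $\R_+^{d+1}$, the integrand multiplied by $n^{3/2}$ (resp.\ $n^{1/2}$) converges to $\mathfrak c_1 u_{d+1}^{1/2}G(\vv u)\prodd k1d\qqq_{s_{k+1},s_k}(u_{k+1},u_k)$ (resp.\ with $\mathfrak c_2 u_{d+1}^{-1/2}$). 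Granting the dominated convergence theorem, the limit equals, by Fubini and the fact that $\prodd k1d\qqq_{s_{k+1},s_k}(u_{k+1},u_k)$ with $u_{d+1}=u$ is the conditional density of $(\Z_{s_1},\dots,\Z_{s_d})$ given $\Z_0=u$,
\[
\mathfrak c_1 \int_{\R_+^{d+1}} G(\vv u)\prodd k1d \qqq_{s_{k+1},s_k}(u_{k+1},u_k)\,u_{d+1}^{1/2}\,d\vv u
= \mathfrak c_1\int_{\R_+}\esp_u G(\Z_{s_1},\dots,\Z_{s_d},u)\,u^{1/2}\,du ,
\]
which is the claimed expression; the case $A=1$ is identical with $u^{1/2}$ replaced by $u^{-1/2}$ and $\mathfrak c_1$ by $\mathfrak c_2$.

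The main obstacle is the interchange of limit and integral, i.e.\ producing an $n$-independent integrable envelope for the scaled integrand. The first input is the elementary bound $G_n(\vv u)\le C\exp\pp{-\summ k1{d+1}\lambda_k u_k}$, valid on the support $\{\vv u\le 4n\}$ for all large $n$, obtained from $1+x\le e^x$ together with $(n_k-n_{k-1})/n\to x_k-x_{k-1}$, where $\lambda_k>0$ as soon as $x_k>x_{k-1}$; in particular the factor $e^{-\lambda_1 u_1}$ is always present since $x_1>x_0=0$, and $G_n$ is uniformly bounded by a constant. The second input is the envelope~\eqref{eq:pihat_bd}, giving $n^{3/2}\what\pi_0\topp n(u_{d+1})\le c\,u_{d+1}^{1/2}$ (resp.\ $n^{1/2}\what\pi_0\topp n(u_{d+1})\le c\,u_{d+1}^{-1/2}$, whose $u_{d+1}^{-1/2}$ singularity at the origin is harmless since $\int_0 u^{-1/2}\,du<\infty$). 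The delicate point is a uniform-in-$n$ upper bound for the transition densities $\what p_{s,t}\topp n$, which I would extract from the estimates underlying Proposition~\ref{prop:tangent} (the marginal bound~\eqref{eq:pi_bd} and the transition asymptotics~\eqref{eq:Markov_convergence}): the exponential decay of $G_n$ in $u_1,\dots,u_d$ must be played against the heavy, individually non-integrable tails of the $\Z$-type kernel to secure integrability over the unbounded region. When $x_d<1$ the factor $e^{-\lambda_{d+1}u_{d+1}}$ makes the $u_{d+1}$-integration converge at once; in the borderline case $x_d=1$ this factor is absent, and the required decay in $u_{d+1}$ must instead be supplied by the transition kernel transporting large starting values to large values over positive times, which is the crux of the estimate. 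Assembling these three ingredients into a single dominating function and invoking dominated convergence then yields the stated asymptotics.
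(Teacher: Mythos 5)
Your overall architecture (condition on the initial value, pass to the limit in the integrand, dominate) matches the paper's, and your identification of the pointwise limits is correct, but there are two genuine gaps. First, your plan to dominate the full $(d+1)$-dimensional integrand requires an $n$-uniform integrable envelope for the product $\prodd k1d \what p_{s_{k+1},s_k}\topp n(u_{k+1},u_k)$; you say you would ``extract'' this from the estimates underlying Proposition~\ref{prop:tangent}, but those estimates (\eqref{eq:pi_bd}, \eqref{eq:Markov_convergence}) give only pointwise asymptotics of the transition densities, not uniform upper bounds, so this step is not available as stated. The paper avoids the issue entirely: it dominates only in the single variable $u=u_{d+1}$ (using \eqref{eq:Gn_bd} and \eqref{eq:pihat_bd}), and handles the inner $d$ variables by upgrading pointwise convergence of the conditional densities to weak convergence of the conditional laws via Scheff\'e's theorem, then using that $G_n$ is uniformly bounded and converges continuously (\eqref{eq:Billingsley}) to conclude $\esp_u G_n(\what Y_{s_1}\topp n,\dots,u)\to\esp_u G(\Z_{s_1},\dots,u)$ with no envelope on the kernels needed. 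Your argument would be repaired by switching to this route.

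Second, the borderline case $x_d=1$, which you correctly flag as the crux, is left unresolved, and within your framework it cannot be fixed without exactly the uniform kernel bounds you do not have. The paper's resolution is a different decomposition: when $x_d=1$ the function $G_n$ does not depend on its last argument, so one conditions instead on $\what Y_{s_d}\topp n$, whose coordinate $u_d$ does carry exponential decay from $G_n$ since $x_d>x_{d-1}$; this produces an integral against $\what\pi_{s_d}\topp n(u)$, which must then be converted back to an integral over the law of $\Z_0$ weighted by $u^{\pm1/2}$. That conversion requires the stationarity of $\Z$ with respect to $u^{1/2}du$ in the case $A<1$, and the explicit identity $\int_0^\infty x^{-1/2}\qqq_{0,t}(x,y)\,dx=\sqrt y/(y+t^2)$ (equation \eqref{eq:Z_A=1}) in the case $A=1$. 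Neither ingredient appears in your sketch, and without them the $x_d=1$ case --- which is needed for the main theorems since the processes are indexed by $x\in[0,1]$ --- does not go through.
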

\begin{proof}
We start with some properties of $G_n$ as preparations.
First, since $(1+x)^m\leq \exp(m x)$ for $m\in\NN, x\geq -1$, we get an exponential bound on $G_n$:
\begin{align}
G_n(\vv u) & \le \prodd k1{d+1}\exp\pp{\frac{n_k-n_{k-1}}{4n}\bb{4n\sinh^2\pp{\frac{s_k}{2\sqrt n}} - u_k}}\indd{\vv u\le 4n}\nonumber\\
& \le c \prodd k1{d+1}\exp\pp{-\frac{n_k-n_{k-1}}{4n}u_k}.\label{eq:Gn_bd}
\end{align}
Here and below, $c$ denotes a constant that does not depend on $\vv u$ and $n$, but may vary from line to line.
This inequality also shows that $G_n(\vv u)$ is uniformly bounded %
for all $\vv u\in\R_+^{d+1}$ and $n\in\N$.

Next, by the inequality $|\prodd k1n a_k - \prodd k1n b_k|\le  M^n \summ k1n |a_k-b_k|$ provided that $|a_k|,|b_k|\le M$, we have, %
for all $\vv u,\vv u'\in[0,4n]$,
\begin{align*}
\abs{G_n(\vv u)- G_n(\vv u')} & \le \pp{1+\sinh^2\pp{\frac {s_1}{2\sqrt n}}}^n\summ k1{d+1}(n_k-n_{k+1})\frac{|u_k-u_k'|}{4n} \\
& \le c\summ k1{d+1}\abs{u_k-u_k'}.
\end{align*}
It follows that for all $\vv u_n, \vv u\in\R_+^{d+1}$,
\begin{equation}\label{eq:Billingsley}
\limn G_n(\vv u_n) = G(\vv u), \mbox{ if } \vv u _n\to \vv u \in \R^{d+1}_+ \mmas n\to\infty.
\end{equation}

In the remaining part of the proof  with a little abuse of notation we write  $\esp_u(\cdot)$  which is to be understood as %
 $\esp(\cdot \mid \what Y_0\topp n = u)$ when dealing with  $\what Y\topp n$ and as $\esp(\cdot\mid \Z_{0} = u)$ when dealing with $\Z$.
 Conditioning on the value of $\what Y\topp n_{s_{d+1}}=\what Y_0\topp n$,  we write
\begin{equation}\label{aadvark2}
  \esp G_n\pp{\what Y_{s_1}\topp n,\dots,\what Y_{s_{d+1}}\topp n}  =
\int_0^\infty \esp_{u}G_n\pp{\what Y_{s_1}\topp n,\dots,\what Y\topp n_{s_{d}},u} \what \pi\topp n_0(u)du.
\end{equation}
Because of
the uniform boundedness, %
\eqref{eq:Billingsley} and the weak convergence of the tangent process established in Proposition~\ref{prop:tangent}, we have
\equh\label{eq:weak_convergence}
\limn\esp_{u}G_n\pp{\what Y_{s_1}\topp n,\dots, \what Y_{s_{d}}\topp n,u}=\esp_u G(\Z_{s_1},\dots,
\Z_{s_{d}},u) \mfa u>0 \eque
(see e.g., \cite[Exercise 6.6]{billingsley99convergence}).

By \eqref{eq:what-pi} the integrands on the right hand side of \eqref{aadvark2} converge pointwise under appropriate normalization.
That is, when  $A<1, C<1$ we have
\begin{equation}
  \label{eq:aadvark2a}
\limn\frac{n^{3/2}}{\mathfrak c_1}  \esp_{u}G_n\pp{\what Y_{s_1}\topp n,\dots,\what Y_{s_{d}}\topp n,u} \what \pi\topp n_0(u)=   \esp_u G(\Z_{s_1},\dots,\Z_{s_{d}},u)u^{1/2}
\end{equation}
and when $A=1,C<1$ we have
\begin{equation}
  \label{eq:aadvark2b}
\limn\frac{n^{1/2}}{\mathfrak c_2} \esp_{u}G_n\pp{\what Y_{s_1}\topp n,\dots,\what Y_{s_{d}}\topp n,u} \what
\pi\topp n_0(u)= \esp_u G(\Z_{s_1},\dots,\Z_{s_{d}},u)\frac1{u^{1/2}}.
\end{equation}
To conclude the proof we now apply
the
dominated convergence theorem.
We see that, if $x_d<1$,~\eqref{eq:Gn_bd} yields $G_n(\vv u) \le c\exp(-(1-x_d)u_{d+1}/4)$. Recall upper bounds on $\what \pi\topp n_0$ in~\eqref{eq:pihat_bd}.
Therefore,
the functions of $u$ that appear on the left-hand side
of \eqref{eq:aadvark2a} and  \eqref{eq:aadvark2b} are bounded by %
the integrable functions %
 $c u^{1/2} \exp\left(-(1-x_d)u/4\right)$ and  $c u^{-1/2} \exp\left(-(1-x_d)u/4\right)$,  respectively, for $n$ large enough. This proves the case $x_d<1$.

 To prove the case $x_d=1$, we need to work a little harder, although the approach is very similar.
 Notice that when $x_d = 1$, both $G_n(\vv u)$ and its limit  $G(\vv u)$ do  not depend on the last coordinate $u_{d+1}$. %
 Therefore we introduce
\[
 G_n^*\pp{u_1,\dots,u_d}  =G_n\pp{u_1,\dots,u_d,0}\qmand
G^*\pp{u_1,\dots,u_d} =G\pp{u_1,\dots,u_d,0}
\]
(the choice of 0 in the last variable of $G_n$ and $G$ is irrelevant for the definitions),
and write $\esp_u^*(\cdot)$ as either $\esp(\cdot\mid \what Y\topp n_{s_d} = u)$ or $\esp(\cdot\mid\Z_{s_d} = u)$, depending on whether the conditional expectation is for $\what Y\topp n$ or $\Z$. By conditioning on the value of $\what Y_{s_d}\topp n$, we  write
\begin{align*}
\esp G_n\pp{\what Y\topp n_{s_1},\dots,\what Y\topp n_{s_{d+1}}} & \equiv \esp G_n^*\pp{\what Y\topp n_{s_1},\dots,\what Y\topp n_{s_{d}}}
\\
& = \int _0^\infty \esp_u^*
G_n^*
\pp{\what Y\topp n_{s_1},\dots,\what Y\topp n_{s_{d-1}},u} \what \pi\topp n_{s_d}(u)du.
\end{align*}
The same argument in the proof of
Proposition~\ref{prop:tangent} for the convergence in law of tangent process leads to
\[
\calL\pp{\pp{\what Y\topp n_{s_1},\dots,\what Y\topp n_{s_{d-1}}}\mmid \what Y\topp n_{s_d} = u} \weakto \calL\pp{(\Z_{s_1},\dots,\Z_{s_{d-1}})\mid \Z_{s_d} = u}.
\]
 Therefore, %
by the same argument for \eqref{eq:weak_convergence}, we have
\[
\limn
\esp^*_uG_n^*\pp{\what Y\topp n_{s_1},\dots,\what Y\topp n_{s_{d-1}},u} = \esp_u^*G^*\pp{\Z_{s_1},\dots,\Z_{s_{d-1}},u},
\] and instead
of~\eqref{eq:aadvark2a} and~\eqref{eq:aadvark2b}, we have, for $A<1,C<1$,
\begin{equation*}
\limn\frac{n^{3/2}}{\mathfrak c_1}  \esp_{u}^*G_n^*\pp{\what Y_{s_1}\topp n,\dots,\what Y_{s_{d-1}}\topp n,u} \what \pi\topp n_{s_d}(u)=   \esp_u^* G^*(\Z_{s_1},\dots,\Z_{s_{d-1}},u)u^{1/2}
\end{equation*}
and for $A=1,C<1$,
\begin{equation*}
\limn\frac{n^{1/2}}{\mathfrak c_2} \esp_{u}^*G_n^*\pp{\what Y_{s_1}\topp n,\dots,\what Y_{s_{d-1}}\topp n,u} \what \pi\topp n_{s_d}(u)= \esp_u^* G^*(\Z_{s_1},\dots,\Z_{s_{d-1}},u)\frac{u^{1/2}}{u+s_d^2}.
\end{equation*}
One can show that for $n$ large enough,  $\what \pi\topp n_{s_d}$ has the same upper bounds as $\what\pi\topp n_0$ in~\eqref{eq:pihat_bd} (independent of $s_d$), by applying~\eqref{eq:pi_bd} to $\what \pi\topp n_{s_d}(u) = \pi_{s_d}(1-u/(2n))/2n$.
By~\eqref{eq:Gn_bd}, for $n$ large enough, $G_n(\vv u) \le c\exp(-(x_d-x_{d-1})u_d/8)$
(we cannot use $(x_d-x_{d-1})/4$ as before, because of the rounding issue caused by $n_k = \floor {nx_k}$).
So the dominated convergence theorem applies, and we arrive at
\begin{multline*}
\esp G_n^*\pp{\what Y\topp n_{s_1},\dots,\what Y\topp n_{s_{d}}} \\
\sim\left\{
\begin{array}{ll}
\displaystyle\frac{\mathfrak c_1}{n^{3/2}}\int_{\R_+}\esp_u^* G^*(\Z_{s_1},\dots,\Z_{s_{d-1}},u)u^{1/2}du & A<1, C<1\\ \\
\displaystyle\frac{\mathfrak c_2}{n^{1/2}}\int_{\R_+}\esp_u^* G^*(\Z_{s_1},\dots,\Z_{s_{d-1}},u)\frac{u^{1/2}}{u+s_d^2}du & A=1, C<1.
\end{array}
\right.
\end{multline*}
These are however not the same expressions as desired yet, and we
need to rewrite them. %
For the case $A<1, C<1$, since $\Z$ is stationary with respect to the distribution $u^{1/2}du$, we have (recalling that $\esp _u(\cdot) = \esp(\cdot\mid \Z_{s_{d+1}} = u)$)
\[
\int_{\R_+}\esp_u^* G^*(\Z_{s_1},\dots,\Z_{s_{d-1}},u)u^{1/2}du = \int_{\R_+}\esp_u G^*(\Z_{s_1},\dots,\Z_{s_{d-1}},   \Z_{s_d}
)u^{1/2}du.
\]
For the case $A=1, C<1$, because of the fact that \equh\label{eq:Z_A=1} \int_0^\infty \frac1{x^{1/2}} \qqq_{0,t}(x,y)dx =
\frac{\sqrt y}{y+t^2}, \eque
 (recall \eqref{eq:Z_pdf})
 we have
\[
\int_{\R_+}\esp_u^* G^*(\Z_{s_1},\dots,\Z_{s_{d-1}},u)\frac{u^{1/2}}{u+s_d^2}du = \int_{\R_+}\esp_u G^*(\Z_{s_1},\dots,\Z_{s_{d-1}},\Z_{s_{d}})\frac1{u^{1/2}}du.
\]
So to complete the proof it remains to show~\eqref{eq:Z_A=1}.
For this purpose, introduce change of variables $x = u^2, y = v^2$, and %
we get elementary integrals
\begin{align*}
\int_0^\infty \frac1{x^{1/2}}\qqq_{0,t}(x,y)dx& = 2\int_0^\infty \qqq_{0,t}(u^2,v^2)du \\
& = \int_0^\infty \frac1{\pi u}\pp{\frac t{t^2+(u-v)^2} - \frac t{t^2+(u+v)^2}}du\\
   &=\frac{t}{2
   \pi  \left(t^2+v^2\right)}\left.\log \frac{t^2+(u+v)^2}{t^2+(u-v)^2}\right|_{u=0}^{u=\infty} \\
   &\quad +\frac{v}{
   \pi  \left(t^2+v^2\right)}\left.\left(\arctan \left(\frac{u-v}{t}\right)+\arctan  \left(\frac{u+v}{t}\right)\right)\right|_{u=0}^{u=\infty}
   \\&=\frac{v}{t^2+v^2}=\frac{\sqrt{y}}{y+t^2}.
\end{align*}
\end{proof}

\begin{proof}[Proof of Theorem~\ref{T1''}]
By Lemma~\ref{lem:limit_Zn} and Proposition~\ref{prop:limit_Gn}, we have
\begin{equation}
  \label{lim-phi}
  \limn \varphi_{\vv x,n}\pp{\frac{\vv c}{\sqrt n}} = \left\{
\begin{array}{ll}
\displaystyle \frac1{4\sqrt{\pi}} \int_{\R_+}\esp_u G(\Z_{s_1},\dots,\Z_{s_{d}},u)u^{1/2} du & A<1,C<1\\ \\
\displaystyle \frac1{2\sqrt{\pi}} \int_{\R_+}\esp_u G(\Z_{s_1},\dots,\Z_{s_{d}},u)\frac{du}{u^{1/2}}  & A=1,C<1.
\end{array}
\right.
\end{equation}

We first prove the case of $A<1,C<1$.
Observe that %
\begin{multline}\label{eq:EG}
\int_{\R_+}\esp_u G(\Z_{s_1},\dots,\Z_{s_{d}},u)u^{1/2} du \\
= \exp\pp{\frac14\summ k1{d+1}s_k^2(x_k-x_{k-1})}\times\int_{\R_+} \esp_u%
{\exp\pp{-\frac14\summ k1{d+1}\Z_{s_k}(x_k-x_{k-1})}}u^{1/2}du.
\end{multline}
The first exponential function on the right-hand side above corresponds to the Laplace transform of the scaled Brownian motion %
$8^{-1/2}\B$ (see~\eqref{eq:Brownian} %
and recall that in \eqref{eq:Brownian} we used $s_k = c_k+\cdots+c_d$, but we have been using $s_k = (c_k+\cdots+c_d)/2$ since \eqref{eq:s_k}).
The integral on the right-hand side of~\eqref{eq:EG}  corresponds to the Laplace transform of a Brownian excursion.
Indeed,  by self-similarity of the process $\Z$,  it equals
\begin{align*}
\int_{\R_+}u^{1/2}du&  \esp_{u}\exp\pp{-\frac12\summ k1{d+1}\frac12\Z_{s_k}(x_{k}- x_{k-1})}\\
& =\int_{\R_+}u^{1/2}du  \esp_{u/2}\exp\pp{-\frac12\summ k1{d+1}\Z_{s_k/\sqrt{2}}(x_{k}- x_{k-1})}\\
& =\sqrt 8 \int_{\R_+}u^{1/2}du \esp_{u}\exp\pp{-\frac12\summ k1{d+1}\Z_{s_k/\sqrt{2}}(x_{k}- x_{k-1})}.
\end{align*}
By the duality expression in~\eqref{eq:ex}, this becomes
\[
 4\sqrt{\pi}\,\esp\exp\pp{-\summ k1d\frac{ s_k- s_{k+1}}{\sqrt 2}\B^{ex}_{ x_{k}}} =  4\sqrt{\pi}\, \esp\exp\pp{-\summ k1d%
 \frac{c_k}{2\sqrt 2}
 \B^{ex}_{ x_{k}}}.
\]
Combining all the
identities
 together we have proved~\eqref{eq:Laplace_convergence} for $A<1, C<1$.\medskip

Now we prove the case of $A=1,C<1$. This time we have %
\begin{multline}\label{eq:EG2}
\int_{\R_+^{d+1}}\esp_u G(\Z_{s_1},\dots,\Z_{s_{d}},u)\frac1{u^{1/2}} du \\
=\esp\exp\pp{-\summ k1d%
\frac{c_k}{2\sqrt 2}\B_{x_k}
}\times
\int_{\R_+}\frac1{u^{1/2}}du\esp_u\exp\pp{-\frac14\summ k1{d+1}\Z_{s_k}(x_k-x_{k-1})}.
\end{multline}
\begin{multline*}
\int_{\R_+ }\esp_u G(\Z_{s_1},\dots,\Z_{s_{d}},u)\frac{du}{u^{1/2}}  \\
=\esp\exp\pp{-\summ k1d%
\frac{c_k}{2\sqrt 2}\B_{x_k}
}\times
\int_{\R_+}%
\esp_u%
{\exp\pp{-\frac14\summ k1{d+1}\Z_{s_k}(x_k-x_{k-1})}} \frac{du}{u^{1/2}}.
\end{multline*}
Again by self-similarity and duality~\eqref{eq:me}, we rewrite  the  integral on the right-hand side above as
\begin{multline*}
 \sqrt 2\int_{\R_+} %
 \esp_u%
 {\exp\pp{-\frac12\summ k1{d+1}\Z_{s_k/\sqrt 2}( x_{k}- x_{k-1})}}\frac{du}{u^{1/2}}\\
 = 2\sqrt \pi\, \esp\exp\pp{-\summ k1d\frac{s_k-s_{k+1}}{\sqrt 2}\B_{x_k}^{me}}
  = 2\sqrt \pi\, \esp\exp\pp{-\summ k1d\frac{c_k}{2\sqrt 2}\B_{x_k}^{me}}.
\end{multline*}
This completes the proof for the case $A=1$ and $C<1$.
\end{proof}

\subsection{Proof for the case $A<1, C=1$}\label{sec:A<1C=1}
This case  of Theorem \ref{T1}  follows from the case $A=1,C<1$
again by the particle-hole duality, which we have explained in Section~\ref{sec:particle_hole}.
Write $\eps_j=1-\tau_{n-j+1}$, and define
$$\what h_n(x)=\summ j1{\floor{nx}}\pp{\varepsilon_j-\frac12}.$$
Similarly as in~\eqref{eq:particle_hole}, this time we have
\[
h_n(x) + \pp{\what h_n(1) - \what h_n(1-x)} = \left\{
\begin{array}{ll}
0 & nx =\floor{nx}\\ \\
\displaystyle -\pp{\varepsilon_{\floor{n(1-x)}+1}-\frac12} & nx\ne \floor{nx}
\end{array}
\right.,
\]
and $\{\what h_n(x)\}_{x\in[0,1]}$ with respect to
$\pi_n^{A,B,C,D}$ has the same law as $\{h_n(x)\}_{x\in[0,1]}$ with respect to
$\pi_n^{C,D,A,B}$. Therefore,  finite-dimensional distributions of the
process
$n^{-1/2}\ccbb{h_n(x)}_{x\in[0,1]} $  have the same limit as the finite-dimensional distributions of the process $n^{-1/2} \sccbb{ {\what h_n(1-x) - \what h_n(1)}}_{x\in[0,1]}$.
 We get  %
\begin{eqnarray*}
\frac1{\sqrt n}\ccbb{h_n(x)}_{x\in[0,1]} & \fddto  & \frac1{2\sqrt 2}\ccbb{ \B_{1-x} + \B_{1-x}^{me}
-(\B_1 + \B_1^{me} )}_{x\in[0,1]} \\
& \eqd & \frac1{2\sqrt 2}\ccbb{\B_x + \B_{1-x}^{me} - \B_{1}^{me}}_{x\in[0,1]}.
\end{eqnarray*}
This completes the proof of Theorem~\ref{T1}.

\subsection*{Acknowledgement}
 The authors benefited from references provided by G\'erard Letac, Jacek Weso\l owski, and Przemys\l aw Matu\l a.
  WB's research was supported in part by the Charles Phelps Taft Research Center at the University of Cincinnati.
YW's research was supported in part by NSA grant H98230-16-1-0322 and Army Research Laboratory grant W911NF-17-1-0006.

\appendix
\section{Weak Convergence from convergence of Laplace transforms}\label{appendix:Laplace}
 It is well known that convergence of Laplace transforms in a neighborhood of $\vv{0}\in\R^d$ implies weak convergence, but it is less known
under what conditions  convergence on an open set away from the origin suffices.
 \citet[Section 5.14, page 378, (5.14.8)]{hoffmann1994probability} and  %
 Mukherjea, Rao, and Suen
  \cite[Theorem 2]{mukherjea2006note} independently discovered the pertinent result
   in the univariate case, and the argument in \cite{mukherjea2006note} generalizes to the multivariate setting,
   compare also \cite[Theorem 2.1]{farrell2006techniques}.

Let   $\vv{X}\topp n=(X_1\topp n,X_2\topp n,\dots,X_d\topp n)$   be a sequence of  random vectors with Laplace transform
 \[
 L_n(\vv{z})=L_n(z_1,\dots,z_d)=%
\esp \exp\pp{\sum_{j=1}^d z_j X_j\topp n}.
\]
\begin{theorem}\label{Thm-Laplace}
Suppose that $L_n(\vv{z})$ are finite and converge pointwise to a function $L(\vv{z})$ for all $\vv{z}$ from an open set in $\R^d$.
If on this open set $L(\vv{z})$ is the Laplace transform of a random variable  $\vv{Y}=(Y_1,\dots,Y_d)$,  then
$\vv{X}\topp n$ converges in distribution to $\vv{Y}$.
\end{theorem}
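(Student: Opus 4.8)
The plan is to reduce the statement to the classical case of Laplace-transform convergence on a \emph{neighborhood of the origin} by an exponential change of measure, and then to undo the change of measure by a uniform-integrability argument. Fix a point $\vv a$ in the given open set $U\subseteq\R^d$ on which $L_n\to L$, let $\mu_n$ be the law of $\vv X\topp n$ and $\mu$ the law of $\vv Y$. Since $L_n(\vv a),L(\vv a)\in(0,\infty)$, the tilted probability measures
\[
d\mu_n^{\vv a}(\vv x)=\frac{e^{\ip{\vv a,\vv x}}}{L_n(\vv a)}\,d\mu_n(\vv x),\qquad
d\mu^{\vv a}(\vv x)=\frac{e^{\ip{\vv a,\vv x}}}{L(\vv a)}\,d\mu(\vv x)
\]
are well defined, with moment generating functions $\wt L_n(\vv s)=L_n(\vv a+\vv s)/L_n(\vv a)$ and $\wt L(\vv s)=L(\vv a+\vv s)/L(\vv a)$. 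Because $U$ is open there is a ball $B(\vv 0,\eps)$ with $\vv a+B(\vv 0,\eps)\subseteq U$, so $\wt L_n(\vv s)\to\wt L(\vv s)$ for every $\vv s\in B(\vv 0,\eps)$; that is, the generating functions of $\mu_n^{\vv a}$ converge to that of the genuine probability measure $\mu^{\vv a}$ on a full neighborhood of the origin.

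At this point I would invoke the well-known result that convergence of Laplace transforms on a neighborhood of $\vv 0$ forces weak convergence, concluding $\mu_n^{\vv a}\Rightarrow\mu^{\vv a}$. The mechanism behind this step is the one to keep in mind for a self-contained write-up: each $L_n$ extends to a function analytic on the tube $\{\vv z:\Re\vv z\in U\}$, the elementary bound $|L_n(\vv a+i\vv\tau)|\le L_n(\vv a)$ together with convexity of $L_n$ on $\R^d$ (and $L_n(\vv 0)=1$) gives local uniform boundedness, and Vitali's theorem upgrades convergence on the real slice to local uniform convergence on the tube. Evaluating at $\vv a+i\vv\tau$ then shows that the characteristic functions of $\mu_n^{\vv a}$ converge to that of $\mu^{\vv a}$, and L\'evy's continuity theorem applies; here the hypothesis that $L$ is an honest Laplace transform is exactly what guarantees the limit is the characteristic function of a probability measure.

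It remains to remove the tilt. Taking the constant function in the definitions gives
\[
\int e^{-\ip{\vv a,\vv x}}\,d\mu_n^{\vv a}=\frac{1}{L_n(\vv a)}\longrightarrow\frac{1}{L(\vv a)}=\int e^{-\ip{\vv a,\vv x}}\,d\mu^{\vv a}.
\]
Since $\mu_n^{\vv a}\Rightarrow\mu^{\vv a}$ and the nonnegative continuous function $\vv x\mapsto e^{-\ip{\vv a,\vv x}}$ has integrals converging to its integral against the limit, a Skorokhod coupling together with Fatou's lemma (equivalently Pratt's or Scheff\'e's lemma) forces $\{e^{-\ip{\vv a,\vv x}}\}$ to be uniformly integrable under $\{\mu_n^{\vv a}\}$. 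Consequently, for every bounded continuous $f$ the continuous family $\{f(\vv x)e^{-\ip{\vv a,\vv x}}\}$ is uniformly integrable, so $\int f e^{-\ip{\vv a,\vv x}}\,d\mu_n^{\vv a}\to\int f e^{-\ip{\vv a,\vv x}}\,d\mu^{\vv a}$; multiplying by $L_n(\vv a)\to L(\vv a)$ and using the change of measure yields $\int f\,d\mu_n\to\int f\,d\mu$, i.e.\ $\vv X\topp n\Rightarrow\vv Y$.

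The step I expect to be most delicate is the analytic continuation from the real slice to the imaginary directions (local uniform boundedness plus Vitali), since this is where one genuinely uses the interplay between the one-sided information on $U$ and the global finiteness of the limit. The untilting is then routine once uniform integrability is available; its only subtlety is that the reweighting factor $e^{-\ip{\vv a,\vv x}}$ is unbounded, which is precisely what the uniform integrability is designed to control.
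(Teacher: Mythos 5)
Your proof is correct and follows essentially the same route as the paper's: an exponential change of measure at a point of the open set reduces the statement to the classical Laplace/Curtiss criterion on a neighborhood of the origin, which is exactly the argument in the appendix (following Mukherjea--Rao--Suen). The only difference is in how the tilt is undone --- the paper uses the bounded monotone approximation $h_\eps = h f/(f+\eps)\nearrow h$ with a liminf argument applied to $h$ and $1-h$, whereas you derive uniform integrability of $e^{-\ip{\vv a,\vv x}}$ under the tilted laws via Skorokhod/Scheff\'e; both are standard and valid.
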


\arxiv{
\begin{proof} The proof is a  modification of \cite[proof of Theorem 2]{mukherjea2006note}.  Denote by $P_n$ the law of
$\vv X\topp n$
 and by $P_\infty$ the law of $\vv{Y}$.
Choose $\eps>0$ and $\vv{c} \in\R^d$   such that $L_n(\vv{z})\to  L(\vv{z})$ for all $\vv{z}$ from the open ball  $\|\vv{z}-\vv{c}\|<\eps$.
Set  $C_n=L_n(\vv{c})$, $C_\infty=L(\vv{c})$ and consider probability measures
$$Q_n(d\vv{x})=\frac{1}{C_n}e^{\vv{x}\cdot\vv{c}}P_n(d\vv{x}),$$
with $n=\infty$ standing for the limiting measure.
Due to our choice of %
$\vv c$,
$$\int_{\R^d} e^{\vv{t}\cdot \vv{x}}Q_n(d\vv{x})= \frac{L_n(\vv{t}+\vv{c})}{C_n} \to
  \frac{L(\vv{t}+\vv{c})}{C_\infty}=\int_{\R^d} e^{\vv{t}\cdot \vv{x}}Q_\infty(d\vv{x})$$
for all $\|\vv{t}\|<\eps$. By the ``usual form" of the Laplace criterion (for example, by the %
Cram\'er--Wold device and %
\citet[Theorem 3]{curtiss1942note}), this
implies weak convergence of $Q_n$ to $Q_\infty$, i.e. for
every bounded continuous function $h$,
$$\int_{\R^d} h(\vv{x})Q_n(d\vv{x})\to \int_{\R^d} h(\vv{x})Q_\infty(d\vv{x}).$$
Mimicking \cite{mukherjea2006note} we take  $0\leq h \leq 1$, $\eps>0$ and note that $f(\vv{x})=e^{\vv{x}\cdot\vv{c}}$ is  strictly positive, so
$$
h_\eps(\vv{x})= \frac{h(\vv x) f(\vv{x})}{f(\vv{x})+\eps}\nearrow h(\vv x) \mbox{ %
for all $\vv x$
as $\eps\searrow 0$}.
$$
Therefore,
\begin{multline*}
\liminf_{n\to\infty} \int h(\vv{x})P_n(d\vv{x})\geq \liminf_{n\to\infty}\int h_\eps(\vv{x})P_n(d\vv{x})
\\=
\lim_{n\to\infty}C_n\int \frac{h(\vv x)  }{f(\vv{x})+\eps} Q_n(d\vv {x})
=C_\infty\int \frac{h(\vv x)  }{f(\vv{x})+\eps} Q_\infty(d\vv {x})\\= \int h_\eps(\vv{x})P_\infty(d\vv{x}).
\end{multline*}
Taking the limit as $\eps\to0$, we get
$$\liminf_{n\to\infty} \int h(\vv{x})P_n(d\vv{x})\geq  \int h(\vv{x})P_\infty(d\vv{x}).$$
Applying the above to $1-h$, we see that
$$\lim_{n\to\infty} \int h(\vv{x})P_n(d\vv{x})= \int h(\vv{x})P_\infty(d\vv{x})$$
for all continuous functions $0\leq h\leq 1$, which ends the proof.
 \end{proof}
 }

\bibliographystyle{apalike}
\bibliography{references}
\end{document}